\newcommand{\diag}{{\rm diag}}
\newcommand{\SNR}{{\rm SNR}}
\newcommand{\Cond}{{\rm Cond}}
\newcommand{\spa}{{\rm span \,}}
\newcommand\numberthis{\addtocounter{equation}{1}\tag{\theequation}}
\DeclareMathOperator*{\Tr}{Tr}
\DeclareMathOperator*{\Var}{Var}
\def\hide #1 {}
\long\def\longhide #1 {}
\theoremstyle{plain}
\newtheorem{theorem}{Theorem}[section]
\newtheorem*{theorem*}{Theorem}
\newtheorem{lemma}[theorem]{Lemma}
\newtheorem{proposition}[theorem]{Proposition}
\newtheorem{corollary}[theorem]{Corollary}
\newtheorem{conjecture}[theorem]{Conjecture}
\theoremstyle{definition}
\newtheorem{remark}[theorem]{Remark}
\newtheorem{example}[theorem]{Example}
\newtheorem{definition}[theorem]{Definition}
\theoremstyle{definition}
\newcommand*{\Scale}[2][4]{\scalebox{#1}{$#2$}}%
\newcounter{nootje}
\renewcommand{\check}[1]
  {\marginpar{\tiny\begin{minipage}{20mm}\begin{flushleft}\thenootje : #1\end{flushleft}\end{minipage}}\addtocounter{nootje}{1}}
\title{Extreme Singular Values of Random Time-Frequency Structured Matrices}
\author{
   Palina Salanevich\thanks{Department of Mathematics, University of California, Los Angeles, US. {\sl Email}: psalanevich@math.ucla.edu.}
}
\begin{document}
\maketitle
\date{}

\begin{abstract}
In this paper, we investigate extreme singular values of the analysis matrix of a Gabor frame $(g, \Lambda)$ with a random window $g$. Columns of such matrices are time and frequency shifts of $g$, and $\Lambda\subset \mathbb{Z}_M\times\mathbb{Z}_M$ is the set of time-frequency shift indices. Our aim is to obtain bounds on the singular values of such random time-frequency structured matrices for various choices of the frame set $\Lambda$, and to investigate their dependence on the structure of $\Lambda$, as well as on its cardinality. We also compare the results obtained for Gabor frame analysis matrices with the respective results for matrices with independent identically distributed entries.\par
\medskip
{\bf Index terms:} structured random matrices, extreme singular values, Gabor frames, time-frequency structured matrices, condition number.
\end{abstract}



\section{Introduction}

Study of the distribution of singular values plays an important role in random matrix theory. One of the motivations to study extreme values of random matrices comes from frame theory.

In the finite dimensional setup, we call a set of vectors ${\Phi = \{\varphi_j\}_{j = 1}^N\subset \mathbb{C}^M}$ a \emph{frame} with \emph{frame bounds} $0< A \leq B$ if, for any $x\in \mathbb{C}^M$, the following inequality holds
\begin{equation*}
A||x||_2^2\le \sum_{j = 1}^N |\langle x, \varphi_j\rangle|^2 \le B||x||_2^2.
\end{equation*}
\noindent In the case when frame bounds can be chosen so that $A = B$, the frame $\Phi$ is called \emph{tight}. The values $\langle x, \varphi_j\rangle$, $j\in \{1,\dots, N\}$, are called the \emph{frame coefficients} of $x$ with respect to the frame~$\Phi$. 

We note that the above inequality holds for some $0< A \leq B<\infty$ if and only if ${\spa (\Phi) = \mathbb{C}^M}$. That is, the notion of a frame is equivalent to the notion of a spanning set of~$\mathbb{C}^M$  in the finite dimensional case. In particular, we have $|\Phi| = N\ge M$. 

By a slight abuse of notation, we identify a frame $\Phi = \{\varphi_j\}_{j = 1}^N\subset \mathbb{C}^M$ with its \emph{synthesis matrix} $\Phi$, having the frame vectors $\varphi_j$ as its columns. The adjoint $\Phi^*$ of the synthesis matrix is called the \emph{analysis matrix} of the frame $\Phi$, and the product $\Phi \Phi^*$ is called the \emph{frame operator} of the frame $\Phi$. To reconstruct a vector from its frame coefficients, one can use a \emph{dual frame} $\tilde{\Phi} = \{\tilde{\varphi}_j\}_{j = 1}^N$, defined so that $x = \sum_{j=1}^N \langle x, \varphi_j\rangle \tilde{\varphi}_j$, for each $x\in \mathbb{Z}_M$.  A dual frame is not uniquely defined if $|\Phi|>M$. The \emph{standard dual frame} of $\Phi$ is given by the Moore-Penrose pseudoinverse $(\Phi \Phi^*)^{-1}\Phi$ of the synthesis matrix $\Phi$. For a complete background on frames in finite dimensions, we refer the reader to \cite{finite_frames_book}.

Frames proved to be a powerful tool in many areas of applied mathematics, computer science, and engineering. The investigation of geometric properties of frames, such as extreme singular values of their analysis matrices, plays a crucial role in many signal processing problems. Among such problems are communication systems, where the frame coefficients are used to transmit a signal over the communication channel; image processing; and also tomography, speech recognition and brain imaging, where the initial signal is not available, but we have access to its measurements in the form of the frame coefficients instead.  One of the key advantages of a frame compared to a basis is the redundancy of the signal representation using frame coefficients. Provided we have a control on the frame bounds, this redundancy allows, among other things, to achieve robust reconstruction of a signal from its frame coefficients that are corrupted by noise, rounding error due to quantization, or erasures. 

Indeed, consider a frame $\Phi = \{\varphi_j\}_{j = 1}^N\subset \mathbb{C}^M$. The optimal lower and upper frame bounds of $\Phi$ are given by 
\begin{equation*}
\begin{gathered}
A = \min_{x\in \mathbb{S}^{M-1}} \sum_{j = 1}^N|\langle x, \varphi_j \rangle|^2 = \min_{x\in \mathbb{S}^{M-1}}||\Phi \Phi^*x||_2^2 = \sigma_{\min}^2(\Phi^*),\\
B = \max_{x\in \mathbb{S}^{M-1}} \sum_{j = 1}^N|\langle x, \varphi_j \rangle|^2 =  \max_{x\in \mathbb{S}^{M-1}}||\Phi \Phi^*x||_2^2 = \sigma_{\max}^2(\Phi^*),
\end{gathered}
\end{equation*}
\noindent where $\mathbb{S}^{M-1} = \{x\in \mathbb{C}^M,  ||x||_2 = 1\}$ denotes the complex unit sphere, and  $\sigma_{\min}(A)$ and $\sigma_{\max}(A)$ denote the smallest and the largest singular values of a matrix $A$, respectively.

Let $c\in \mathbb{C}^N$ be a vector of noisy frame coefficients of a signal $x\in \mathbb{C}^M$ with respect to the frame $\Phi$. That is,
\begin{equation*}
c=\Phi^*x+ \delta,
\end{equation*}
\noindent where $\delta\in \mathbb{C}^N$ is a noise vector. Then an estimate $\tilde{x}$ of the initial signal $x$ can be obtained from its noisy measurements $c$ using the standard dual frame of $\Phi$. More precisely, we have
\begin{equation*}
\tilde{x} = (\Phi \Phi^*)^{-1}\Phi c = x + (\Phi \Phi^*)^{-1}\Phi\delta.
\end{equation*}
\noindent Thus, for the reconstruction error we have
\begin{equation*}
||\tilde{x} - x||_2^2 \le ||(\Phi \Phi^*)^{-1}\Phi ||_2^2||\delta||_2^2 = \frac{||\delta||_2^2}{\sigma_{\min}^2(\Phi^*)}.
\end{equation*}
Moreover, if we know a bound on the signal to noise ratio $\SNR = \frac{||\Phi^* x||_2}{||\delta||_2}$ for the channel used, then the norm of the reconstruction error $||(\Phi \Phi^*)^{-1}\Phi\delta||_2$ compares to the norm of the initial signal $||x||_2$ as
\begin{equation*}
\frac{||(\Phi \Phi^*)^{-1}\Phi\delta||_2}{||x||_2}\le \frac{\Cond(\Phi^*)}{\SNR}.
\end{equation*}
Here, 
\begin{equation*}
\begin{split}
\Cond(\Phi^*) & = \sup_{x\in \mathbb{C}^M\setminus \{0\}}\sup_{\delta\in \mathbb{C}^N\setminus \{0\}} \SNR\frac{||(\Phi \Phi^*)^{-1}\Phi\delta||_2}{||x||_2} \\
& = \sup_{x\in \mathbb{C}^M\setminus \{0\}} \frac{||\Phi^* x||_2}{||x||_2} \sup_{\delta\in \mathbb{C}^N\setminus \{0\}}\frac{||(\Phi \Phi^*)^{-1}\Phi\delta||_2}{||\delta||_2} = \frac{\sigma_{\max}(\Phi^*)}{\sigma_{\min}(\Phi^*)} = \frac{\sqrt{B}}{\sqrt{A}}.
\end{split}
\end{equation*}
\noindent That is, $\Cond(\Phi^*)$ is equal to the condition number of the analysis matrix of the frame~$\Phi$.

Thus, frame bounds, or extreme singular values of the frame analysis matrix, indicate the ``quality'' of a frame in the sense of the robustness of the reconstruction of an initial signal from its noisy frame coefficients. In the case when frame bounds of $\Phi$ are sufficiently close to each other, that is, when $\Cond(\Phi^*)$ is not too large, we call the frame $\Phi$ \emph{well-conditioned}.

Extreme singular values are sufficiently well-studied for random matrices with independent entries, which can be viewed as analysis matrices of randomly generated frames with independent frame vectors (see Section \ref{sec_overwiew_propert} for some results). At the same time, the concrete application for which a signal processing problem is studied usually dictates the structure of the frame used to represent a signal. This motivates the study of properties of structured random matrices corresponding to application relevant frames, such as Gabor frames.

\begin{definition}[Gabor frames]\label{Gabor_def}\mbox{}
\begin{enumerate}[leftmargin=*]
\item \emph{Translation} (or \emph{time shift}) by $k\in \mathbb{Z}_M$, is given by
\begin{equation*}
T_k x = T_k\left( x(0), x(1),\dots, x(M-1)\right) = \left(x(m-k)\right)_{m\in \mathbb{Z}_M}.
\end{equation*}
\noindent That is, $T_k$ permutes entries of $x$ using $k$ cyclic shifts.
\item \emph{Modulation} (or \emph{frequency shift}) by $\ell\in \mathbb{Z}_M$ is given by 
\begin{equation*}
M_{\ell} x = M_{\ell}\left( x(0), x(1),\dots, x(M-1)\right) = \left(e^{2\pi i \ell m/M}x(m)\right)_{m\in \mathbb{Z}_M}.
\end{equation*}
\noindent That is, $M_{\ell}$ multiplies $x = x(\cdot)$ pointwise with the harmonic $e^{2\pi i\ell(\cdot) /M}$.

\item The superposition $\pi(k,\ell) = M_{\ell}T_k$ of translation by $k$ and modulation by $\ell$ is a \emph{time-frequency shift operator}.

\item For $g\in\mathbb{C}^M\setminus \{0\}$ and $\Lambda\subset  \mathbb{Z}_M \times \mathbb{Z}_M$, the set of vectors 
\begin{equation*}
(g, \Lambda) = \{\pi (k, \ell)g\}_{(k,\ell)\in \Lambda}
\end{equation*}
\noindent is called the \emph{Gabor system} generated by the \emph{window} $g$ and the set $\Lambda$. A Gabor system  which spans $\mathbb{C}^M$ is a frame and is referred to as a \emph{Gabor frame}.
\end{enumerate}
\end{definition}

Here and in the sequel, we view a vector $x\in\mathbb{C}^M$ as a function $x:\mathbb{Z}_M\to~\mathbb{C}$, that is, all the operations on indices are done modulo $M$ and $x(m-k) = x(M+m -k)$. A more detailed description of Gabor frames in finite dimensions and their properties can be found in \cite{pfander2}.

\medskip

In this paper, we investigate extreme singular values of the analysis matrices of Gabor frames with random windows. As columns of such matrices are time-frequency shifts of the window vector, their entries, rows and columns are not independent. We see that, unlike the case of random matrices with independent entries, the singular values of a random time-frequency structured matrix depend not only on its dimensions, but also on the structure of the frame set~$\Lambda$. One of our aims therefore is to study this dependence by obtaining bounds on the singular values of analysis matrices of Gabor frames with frame sets having different structure. We also compare the obtained results to the respective results for random frames with independent entries. We show that, in the case of a generic Gabor frame, that is, for a randomly selected~$\Lambda$, singular values of the analysis matrix are close to the singular values of a random matrix with independent entries that has same dimensions.

The remaining part of this paper is organizes as follows. In Section \ref{sec_overwiew_propert} we give a brief overview of the results on the singular values of tall-and-skinny random matrices with independent identically distributed entries. We formulate the main results of this paper and compare them to the analogous results on matrices with independent entries in Section \ref{sec_main_results}. These result are then proven in Section~\ref{sec_proof_of_main_results}.  In Section~\ref{sing_val_sec},  we analyze the case when Gabor frame set has a particular simple structure $\Lambda = F\times\mathbb{Z}_M$ (or~$\mathbb{Z}_M\times F$), $F\subset\mathbb{Z}_M$, and show that the analysis matrix in this case is well-conditioned if the Gabor window $g$ is not to ``spiky''.  Finally, Section \ref{sec_num_res_sing_val}  contains numerical analysis of the singular values of random time-frequency structured matrices and discussion of the direction for further research. Appendix contains the probabilistic tools and results used in this paper.

\subsection{Related work}\label{sec_overwiew_propert}

Before we study the case when $\Phi$ is a Gabor frame, we include here a short overwiew of the results on the singular values for random frames with independent entries, such as Gaussian matrices. The largest singular value of the analysis matrix $\Phi^*$ of a random frame with independent entries can be estimated using Latala’s theorem \cite{latala2005some}. The following result implies that, with high probability,~${\sigma_{\max}(\Phi^*) = O\left( \sqrt{\frac{N}{M}}\right)}$.

\begin{theorem}\emph{\bf \cite{latala2005some}}\label{th_latala}
Let $\Phi^*\in \mathbb{C}^{N\times M}$, $N>M$, be a random matrix whose entries $\varphi_j(m)$, ${j\in \{1,\dots, N\}}$, $m\in \mathbb{Z}_M$, are independent identically distributed centered random variables, normalized so that ${\Var(\varphi_j(m)) = \frac{1}{M}}$. Assume further that $\mathbb{E}\left( |\varphi_j(m)|^4 \right)\le \frac{B}{M^2}$ for some constant $B>1$. Then there exists a constant $C>0$ depending only on $B$, such that
\begin{equation*}
\mathbb{E} \left( \sigma_{\max}(\Phi^*) \right)\le C\sqrt{\frac{N}{M}}.
\end{equation*}
\end{theorem}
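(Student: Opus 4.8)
The plan is to deduce the bound directly from Latala's general estimate for the operator norm of a random matrix with independent centered entries, noting that $\sigma_{\max}(\Phi^*)$ is exactly the operator norm $\|\Phi^*\|$. Latala's inequality provides a universal constant $C_0$ with
\[
\mathbb{E}\|\Phi^*\| \le C_0\left( \max_j \Big(\sum_{m} \mathbb{E}|\varphi_j(m)|^2\Big)^{1/2} + \max_m \Big(\sum_{j} \mathbb{E}|\varphi_j(m)|^2\Big)^{1/2} + \Big(\sum_{j,m} \mathbb{E}|\varphi_j(m)|^4\Big)^{1/4} \right).
\]
So the first step is simply to invoke this inequality, after which the remaining work is bookkeeping of the moment hypotheses.

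Second, I would evaluate the three terms. Since each entry has variance $\frac1M$, a fixed row (indexed by $j$, of length $M$) contributes $\sum_m \mathbb{E}|\varphi_j(m)|^2 = M\cdot\frac1M = 1$, so the first maximum equals $1$. A fixed column (indexed by $m$, of length $N$) contributes $\sum_j \mathbb{E}|\varphi_j(m)|^2 = N\cdot\frac1M = \frac NM$, so the second maximum equals $\sqrt{N/M}$. For the fourth-moment term, the hypothesis $\mathbb{E}|\varphi_j(m)|^4 \le \frac{B}{M^2}$ gives $\sum_{j,m}\mathbb{E}|\varphi_j(m)|^4 \le NM\cdot\frac{B}{M^2} = \frac{BN}{M}$, whence the third term is at most $B^{1/4}(N/M)^{1/4}$.

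Third, I would use $N>M$ to collapse these into a single rate. Since $N/M>1$, we have $1 \le \sqrt{N/M}$ and $(N/M)^{1/4} \le \sqrt{N/M}$, so all three terms are bounded by $\max\{1,B^{1/4}\}\sqrt{N/M}$. Summing and absorbing constants yields $\mathbb{E}(\sigma_{\max}(\Phi^*)) \le C\sqrt{N/M}$ with, e.g., $C = C_0(2+B^{1/4})$, a constant depending only on $B$, as claimed.

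Finally, the genuine analytic content sits entirely inside Latala's inequality, whose own proof rests on a delicate moment and chaining argument; taking that as the black box suggested by the citation, the rest is a routine calculation. The one point requiring care is that the entries are complex: I would handle this either by applying a complex-valued form of Latala's bound directly, or by splitting $\varphi_j(m)$ into real and imaginary parts, observing that the variance and fourth-moment control pass to each part up to universal constants, and bounding $\|\Phi^*\|$ by the sum of the norms of the real and imaginary matrices. A self-contained alternative would replace the black box with an $\varepsilon$-net over $\mathbb{S}^{M-1}$ together with a Bernstein-type concentration estimate for $\|\Phi^* x\|_2^2$, but this essentially reconstructs Latala's argument and offers no simplification here.
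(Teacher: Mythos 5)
The paper gives no proof of this statement: it is imported verbatim from Latala's paper, with the surrounding text indicating that it is meant to be read off from Latala's general three-term bound on $\mathbb{E}\|A\|$ for matrices with independent centered entries. Your derivation does exactly that, and the bookkeeping is correct: the row, column, and fourth-moment terms evaluate to $1$, $\sqrt{N/M}$, and $B^{1/4}(N/M)^{1/4}$ respectively, all dominated by $\max\{1,B^{1/4}\}\sqrt{N/M}$ once $N>M$; your remark on reducing the complex case to the real one is also the standard (and necessary) fix. So the proposal is correct and follows the same route the paper implicitly intends.
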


The following optimal estimate of the smallest singular value of the analysis matrix for a random subgaussian frame with independent entries is due to Rudelson and Vershynin \cite{rudelson2009smallest}.

\begin{theorem}\emph{\bf \cite{rudelson2009smallest}}\label{th_rudelson_vershynin}
Let $\Phi^*\in \mathbb{C}^{N\times M}$, $N>M$, be a random matrix with entries $\varphi_j(m)$, $j\in \{1,\dots, N\}$, $m\in \mathbb{Z}_M$, that are independent identically distributed $L$-subgaussian random variables with zero mean, normalized so that ${\Var(\varphi_j(m)) = \frac{1}{M}}$. Then, for any $\varepsilon\ge 0$,
\begin{equation*}
\mathbb{P} \left\lbrace \sigma_{\min}(\Phi^*) >  \varepsilon \left(\sqrt{\frac{N}{M}} - \sqrt{\frac{M-1}{M}}\right)\right\rbrace \geq 1 - (C\varepsilon)^{N-M + 1}  + c^N,
\end{equation*}
\noindent where constants $C > 0$ and $c \in (0, 1)$ depend only on $L$.
\end{theorem}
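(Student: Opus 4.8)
The plan is to control $\sigma_{\min}(\Phi^*) = \inf_{x\in\mathbb{S}^{M-1}}\|\Phi^*x\|_2$ by bounding $\|\Phi^*x\|_2$ from below uniformly over the sphere, i.e.\ to estimate the probability of the bad event $\{\sigma_{\min}(\Phi^*)\le\varepsilon(\sqrt{N/M}-\sqrt{(M-1)/M})\}$. A plain $\varepsilon$-net union bound fails in the near-square regime $N\approx M$, since a net of $\mathbb{S}^{M-1}$ has cardinality of order $C^M$, which the per-point small-ball probability does not beat. I would therefore follow the Rudelson--Vershynin scheme and split $\mathbb{S}^{M-1}$ into \emph{compressible} vectors (those within a small distance $\delta$ of a $\delta M$-sparse unit vector) and the complementary \emph{incompressible} vectors, treating the two sets by genuinely different arguments and fixing the sparsity level $\delta=\delta(L)$ at the end.

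The common probabilistic input is a small-ball estimate. For fixed $x\in\mathbb{S}^{M-1}$ the coordinates $\langle\varphi_j,x\rangle$ are i.i.d., $L$-subgaussian, of variance $1/M$, and $\|\Phi^*x\|_2^2=\sum_{j=1}^N|\langle\varphi_j,x\rangle|^2$ has mean $N/M$. A one-dimensional anti-concentration bound $\mathbb{P}(|\langle\varphi_j,x\rangle|\le t_0/\sqrt{M})\le p_0<1$ for a fixed $t_0$ (obtained from the variance and a bounded fourth moment via Paley--Zygmund) tensorizes over the $N$ independent coordinates to give $\mathbb{P}(\|\Phi^*x\|_2\le c_0\sqrt{N/M})\le c_1^N$, with $c_0,c_1\in(0,1)$ depending only on $L$. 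For the compressible set I would then take a net: it admits a net of cardinality of order $\binom{M}{\delta M}(C/\delta)^{\delta M}$, which for $\delta$ small enough is at most $c_1^{-N/2}$, so a union bound (passing from the net to the whole set with the crude operator-norm control $\sigma_{\max}(\Phi^*)=O(\sqrt{N/M})$ supplied by Theorem \ref{th_latala}) shows $\inf_{x\text{ compr}}\|\Phi^*x\|_2\ge c_0\sqrt{N/M}/2$ outside an event of probability $c^N$. This produces the $c^N$ term.

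The sharp exponent comes from the incompressible set, which I would treat by the invertibility-via-distance principle. Let $C_1,\dots,C_M$ be the columns of $\Phi^*$ and $H_k=\spa(C_i:i\ne k)$. From $\Phi^*x=\sum_k x_kC_k$ one gets $\|\Phi^*x\|_2\ge|x_k|\,\dist(C_k,H_k)$ for every $k$, while an incompressible $x$ satisfies $|x_k|\ge c/\sqrt{M}$ on at least $cM$ coordinates; hence on the event $\{\inf_{x\text{ incomp}}\|\Phi^*x\|_2<t\}$ at least $cM$ of the distances $\dist(C_k,H_k)$ fall below $t\sqrt{M}/c$, and averaging via Markov gives $\mathbb{P}(\inf_{x\text{ incomp}}\|\Phi^*x\|_2<t)\le c^{-1}\max_k\mathbb{P}(\dist(C_k,H_k)<t\sqrt{M}/c)$. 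Now $\dist(C_k,H_k)=\|P_{H_k^\perp}C_k\|_2$, and since $\dim H_k\le M-1$ the subspace $H_k^\perp$ has dimension $d\ge N-M+1$; conditioning on the remaining columns, $C_k$ is an independent $L$-subgaussian vector and the small-ball bound for the norm of its projection, $\mathbb{P}(\|P_{H_k^\perp}C_k\|_2\le\varepsilon\sqrt{d/M})\le(C\varepsilon)^d$, delivers exactly the exponent $N-M+1$.

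Combining the two regimes yields $\mathbb{P}\{\sigma_{\min}(\Phi^*)\le\varepsilon(\sqrt{N/M}-\sqrt{(M-1)/M})\}\le(C\varepsilon)^{N-M+1}+c^N$, as claimed. The threshold $\sqrt{N/M}-\sqrt{(M-1)/M}=(\sqrt{N}-\sqrt{M-1})/\sqrt{M}$ is the hard edge of the spectrum, and the $N-M+1$ fluctuation dimension matches $\dim H_k^\perp$. I expect the main obstacle to be the incompressible case: getting $N-M+1$ rather than $N-M$ or $N$ in the exponent hinges entirely on the distance-to-subspace reformulation together with the small-ball bound for projections of a subgaussian vector onto a fixed subspace, neither of which is visible from a net argument. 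The subgaussian (rather than Gaussian) anti-concentration estimates and the careful tracking of the constant in the edge $\sqrt{N}-\sqrt{M-1}$ are the remaining technical points.
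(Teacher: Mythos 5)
This statement is quoted in the paper as background from \cite{rudelson2009smallest} and is not proved there, so there is no internal argument to compare against; I am therefore judging your sketch against the actual Rudelson--Vershynin proof. Your outline reproduces its architecture faithfully: the compressible/incompressible decomposition of $\mathbb{S}^{M-1}$, the tensorized small-ball estimate plus net argument on the compressible part (the source of the $c^N$ term), and the invertibility-via-distance reduction on the incompressible part, with the exponent $N-M+1$ correctly traced to $\dim H_k^{\perp}$. You also correctly identify the hard edge $\sqrt{N}-\sqrt{M-1}$ and why a plain net argument fails when $N\approx M$. (Incidentally, the bound as displayed in the paper has a sign typo: the probability should read $1-(C\varepsilon)^{N-M+1}-c^{N}$.)

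The one genuine gap is the step you present as automatic: ``conditioning on the remaining columns, \ldots the small-ball bound for the norm of its projection, $\mathbb{P}(\|P_{H_k^{\perp}}C_k\|_2\le\varepsilon\sqrt{d/M})\le(C\varepsilon)^{d}$.'' For a general $L$-subgaussian coordinate distribution this inequality is false for an arbitrary fixed subspace: with Bernoulli $\pm 1/\sqrt{M}$ entries and $H^{\perp}$ spanned by $(1,-1,0,\dots,0)/\sqrt{2}$ one has $\|P_{H^{\perp}}C_k\|_2=0$ with probability $1/2$, so no bound of the form $(C\varepsilon)^{d}$ can hold uniformly over subspaces. The subspace $H_k^{\perp}$ in your argument is random, determined by the other columns, and the technical core of the Rudelson--Vershynin paper is precisely to show that with high probability this random subspace carries no additive structure --- quantified through the essential least common denominator and a multidimensional Littlewood--Offord theorem --- and that for such arithmetically unstructured subspaces the projection small-ball bound does hold with exponent $d=N-M+1$. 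Without this ingredient the incompressible case, and hence the sharp exponent, does not follow; your closing remark that subgaussian anti-concentration is ``a remaining technical point'' understates that this is the main content of the theorem.
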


We note that the estimate given by this result is tight also for square matrices, that is, when~${N = M}$.

\medskip

To the best of our knowledge, singular values of the analysis matrix $\Phi_\Lambda^*$ of a Gabor frame with random window and general $\Lambda$ with $|\Lambda|>M$ were not studied before. At the same time, the following bounds on the singular values of the synthesis matrix $\Phi$ of a Gabor system $(g,\Lambda)$ with a Steinhaus window $g$ and $|\Lambda| < M$ have been established in \cite{pfander2010sparsity}.

\begin{theorem}\emph{\bf \cite{pfander2010sparsity}}
Let $g$ be a Steinhaus window, that is, $g(j) = \frac{1}{\sqrt{M}}e^{2\pi i y_j}$, $j\in \mathbb{Z}_M$, with $y_j$ independent uniformly distributed on~$[0,1)$. Consider a Gabor system $(g, \Lambda)$ and let $\varepsilon, \delta \in (0,1)$. Suppose further that
\begin{equation*}
|\Lambda| \le \frac{\delta^2 M}{4e (\log (|\Lambda|/\varepsilon) + c)},
\end{equation*}
\noindent where $c = \log(e^2 / (4(e - 1)))\approx 0.0724$. Then $||I_\lambda - \Phi_\Lambda^* \Phi_\Lambda||_2\le \delta$ with probability at least $1 - \varepsilon$.

In other words the minimal and maximal singular values of $\Phi_\Lambda$ satisfy $$1 - \delta\le  \sigma_{\min}^2(\Phi_\Lambda) \le  \sigma_{\max}^2(\Phi_\Lambda) \le  1+\delta$$ with probability at least $1 - \varepsilon$.
\end{theorem}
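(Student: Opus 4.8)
The plan is to write $G = \Phi_\Lambda^*\Phi_\Lambda$ for the $|\Lambda|\times|\Lambda|$ Gram matrix and to show that $G$ concentrates around its mean $I_\Lambda$ in operator norm, since $\|I_\Lambda - G\|_2\le\delta$ says exactly that every eigenvalue of $G$ — that is, every squared singular value of $\Phi_\Lambda$ — lies in $[1-\delta,1+\delta]$, which is the singular-value reformulation. First I would compute $\mathbb{E}G$. Writing $\xi_j = e^{2\pi i y_j}$ for the independent Steinhaus phases, the entries of $G$ are, up to unimodular factors, values of the discrete ambiguity function of $g$:
\[
G_{(k,\ell),(k',\ell')} = \frac1M\sum_{m\in\mathbb{Z}_M} e^{2\pi i(\ell'-\ell)m/M}\,\overline{\xi_{m-k}}\,\xi_{m-k'}.
\]
The diagonal entries equal $\|g\|_2^2 = 1$ deterministically; entries with $k=k'$ but $\ell\ne\ell'$ vanish identically (geometric sum); and all remaining ($k\ne k'$) entries have mean zero because the $\xi_j$ are independent and centered. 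Hence $\mathbb{E}G = I_\Lambda$, and it remains to bound the deviation $H := G - I_\Lambda$.

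The matrix $H$ is a Hermitian \emph{second-order chaos} in the variables $\{\xi_j\}_{j\in\mathbb{Z}_M}$: its nonzero entries are normalized sums of products $\overline{\xi_{m-k}}\xi_{m-k'}$ of two \emph{distinct} phases. The natural rank-one representation $G = \sum_{m} w_m w_m^*$, with $w_m(k,\ell) = \frac1{\sqrt M}e^{-2\pi i\ell m/M}\overline{\xi_{m-k}}$, exhibits $G$ as a sum of positive semidefinite rank-one matrices, each of exact norm $\|w_m\|_2^2 = |\Lambda|/M$. The obstacle is that these summands are \emph{not independent}: $w_m$ depends on all phases $\{\xi_{m-k}\}$ as $k$ ranges over the time-shifts appearing in $\Lambda$, and distinct $w_m$ share phases whenever the corresponding time-shifts differ by an element of the difference set. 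Consequently the Ahlswede--Winter / matrix Bernstein inequality cannot be applied to this decomposition directly. I would route around this by decoupling: pass to $\tilde H = \sum_{p,q}\xi_p\overline{\xi_q'}B_{pq}$ with an independent copy $\{\xi_q'\}$ via the decoupling inequality for chaos, condition on $\{\xi_q'\}$ to linearize $\tilde H$ as a matrix Steinhaus series $\sum_p\xi_p C_p$, and apply the noncommutative (matrix) Khintchine inequality to bound its moments by $\max\{\|\sum_p C_pC_p^*\|_2^{1/2},\|\sum_p C_p^*C_p\|_2^{1/2}\}$, whose expectation I would control in turn.

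The payoff is a Bernstein-type tail bound. The two governing quantities are both exactly of order $|\Lambda|/M$: the uniform bound $\max_m\|w_mw_m^*\|_2 = |\Lambda|/M$, and the matrix-variance proxy, since $\|w_m\|_2^2=|\Lambda|/M$ is deterministic and $\sum_m\mathbb{E}[w_mw_m^*] = I_\Lambda$ give $\sum_m\mathbb{E}(w_mw_m^*)^2 = \frac{|\Lambda|}{M}I_\Lambda$. This yields an estimate of the shape
\[
\mathbb{P}\bigl(\|H\|_2 > \delta\bigr)\;\le\;|\Lambda|\,\exp\!\Bigl(-\tfrac{\delta^2 M}{4e\,|\Lambda|} + c\Bigr),
\]
and imposing that the right-hand side be at most $\varepsilon$ rearranges, after taking logarithms, to precisely the stated hypothesis $|\Lambda|\le \delta^2 M/\bigl(4e(\log(|\Lambda|/\varepsilon)+c)\bigr)$, with the explicit constants $4e$ and $c = \log(e^2/(4(e-1)))$ emerging from optimizing the free parameter in the Chernoff/Bernstein step.

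I expect the genuine difficulty to lie entirely in the second paragraph: producing a matrix-concentration bound with an \emph{explicit} and sharp constant for a quadratic chaos whose natural decomposition is not independent. Obtaining the clean factor $4e$ (rather than an unspecified absolute constant) requires a careful, constant-tracking execution of the decoupling and Khintchine steps — or, alternatively, recasting $H$ as an honest sum of independent matrices to which Ahlswede--Winter applies with optimized parameters. The mean computation and the conversion of the tail bound into the stated cardinality condition are, by contrast, routine.
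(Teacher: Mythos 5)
This theorem is quoted from \cite{pfander2010sparsity}; the present paper does not prove it, but its own machinery for exactly this question (Lemma \ref{lemma_trace_formula} and the proofs of Theorems \ref{th_m2_max_sing_val_Gabor} and \ref{th_sing_val_rand_lambda}, which the author explicitly models on Lemma~3.4 of that reference) is the trace--moment method: bound $\mathbb{P}\{\|H\|_2>\delta\}$ by $\delta^{-2m}\,\mathbb{E}(\Tr H^{2m})$ via Markov's inequality and the Frobenius norm, evaluate the Steinhaus expectations exactly (a product $\mathbb{E}(g(j)^{\mu}\overline{g(j)}^{\nu})$ vanishes unless $\mu=\nu$), reduce the trace to a combinatorial count over permutations and their cycle structure, and optimize over $m$; that optimization is where the explicit constants $4e$ and $c=\log(e^2/(4(e-1)))$ come from. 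Your setup is correct and matches the source's starting point: the Gram matrix has expectation $I_\Lambda$ (including the exact vanishing of the $k=k'$, $\ell\ne\ell'$ entries), the rank-one decomposition $G=\sum_m w_mw_m^*$ with $\|w_m\|_2^2=|\Lambda|/M$ is right, and you correctly diagnose that the $w_m$ are dependent, so Ahlswede--Winter/matrix Bernstein does not apply to that decomposition.

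However, your route for the key concentration step --- decoupling the second-order Steinhaus chaos and applying the noncommutative Khintchine inequality --- is genuinely different from the moment method, and as written it is a plan rather than a proof: the decoupling, the conditioning, and the two rounds of Khintchine are never executed, and the claim that $4e$ and $c$ ``emerge from optimizing the free parameter'' is not something that route delivers. Decoupling costs an absolute constant in the relevant moment inequality and noncommutative Khintchine another, so what you would actually obtain is the theorem with unspecified absolute constants in place of $4e$ and $c$ --- qualitatively the same statement, quantitatively weaker. (Your variance computation $\sum_m\mathbb{E}(w_mw_m^*)^2=\frac{|\Lambda|}{M}I_\Lambda$ is correct but only heuristic, since it refers to the decomposition you rightly discarded.) The trade-off is real: the combinatorial trace count exploits the exact moment structure of Steinhaus phases and yields the sharp stated constants, while your decoupling/Khintchine route is more robust and would extend to general bounded or subgaussian window phases. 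If you want the theorem as stated, you need the former; your argument, once completed, proves a legitimate but constant-degraded variant.
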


\section{Main results}\label{sec_main_results}

In this paper we study Gabor frames with random windows and their analysis matrices. One of the main distributions for the window vectors considered in this paper is Steinhaus distribution, that is defined as follows.

\begin{definition}
A random vector $g\in \mathbb{S}^{M-1}$, such that $g(m) = \frac{1}{\sqrt{M}}e^{2\pi i y_m}$, $m\in \mathbb{Z}_M$ with $y_m$ independent uniformly distributed on $[0,1)$, is called a \emph{Steinhaus vector}.
\end{definition}

As we mentioned before, singular values of the analysis matrix $\Phi_\Lambda^*$ of a Gabor frame depend on the structure of the frame set $\Lambda$. In this paper we obtain the following bound on the largest singular value $\sigma_{\max}^2(\Phi_\Lambda^*)$ that holds for all $\Lambda$, independently of its structure, and only depends of the cardinality of $\Lambda$. One should consider this result as the worst case bound, since, as we see in Section \ref{sing_val_sec}, Example~\ref{example_structured}, much better bounds can be established for sets $\Lambda$ with specific structure.

\begin{theorem}\label{th_m2_max_sing_val_Gabor}
Let $g\in \mathbb{C}^M$ be a Steinhaus window and consider a Gabor system $(g, \Lambda)$ with $\Lambda\subset \mathbb{Z}_M\times \mathbb{Z}_M$. Then, for each fixed $\varepsilon\in (0,1)$, with probability at least $1 - \varepsilon$,
\begin{equation*}
\sigma_{\max}^2(\Phi_\Lambda^*)\le \frac{|\Lambda|}{M} + \sqrt{\frac{|\Lambda|}{\varepsilon}\left(1 - \frac{|\Lambda|}{M^2}\right)}.
\end{equation*}
\end{theorem}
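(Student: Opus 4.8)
The plan is to work with the frame operator $S=\Phi_\Lambda\Phi_\Lambda^*\in\mathbb{C}^{M\times M}$, for which $\sigma_{\max}^2(\Phi_\Lambda^*)=\lambda_{\max}(S)=\|S\|_{op}$. Writing $(\pi(k,\ell)g)(m)=e^{2\pi i\ell m/M}g(m-k)$, the entries of $S$ are
\begin{equation*}
S_{m,m'}=\sum_{(k,\ell)\in\Lambda}e^{2\pi i\ell(m-m')/M}\,g(m-k)\overline{g(m'-k)}.
\end{equation*}
Since $|g(j)|^2=1/M$, the diagonal is deterministic and constant, $S_{m,m}=|\Lambda|/M$; and since the Steinhaus phases satisfy $\mathbb{E}(e^{2\pi i y_j})=0$ with the $y_j$ independent, every off-diagonal entry has mean zero, so $\mathbb{E}(S)=\frac{|\Lambda|}{M}I$. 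I would then center, setting $\tilde S=S-\frac{|\Lambda|}{M}I$ (which has zero diagonal), and use
\begin{equation*}
\lambda_{\max}(S)=\frac{|\Lambda|}{M}+\lambda_{\max}(\tilde S)\le\frac{|\Lambda|}{M}+\|\tilde S\|_{op}\le\frac{|\Lambda|}{M}+\|\tilde S\|_{HS},
\end{equation*}
where $\|\cdot\|_{HS}$ is the Hilbert–Schmidt norm, thereby bounding the operator norm by the easier-to-control Frobenius norm.

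Next I would estimate $\mathbb{E}(\|\tilde S\|_{HS}^2)=\sum_{m\neq m'}\mathbb{E}(|S_{m,m'}|^2)$. Expanding $|S_{m,m'}|^2$ as a double sum over $(k,\ell),(k',\ell')\in\Lambda$ and taking expectation over the Steinhaus phases, the key observation is that $\mathbb{E}(e^{2\pi i y_j})=0$ forces the four phase indices $m-k,\,m'-k,\,m-k',\,m'-k'$ to cancel exactly; for $m\neq m'$ this happens precisely when $k=k'$, each surviving term contributing the residual modulation phase $e^{2\pi i(\ell-\ell')(m-m')/M}$. Grouping $\Lambda$ by its time-shift columns $L_k=\{\ell:(k,\ell)\in\Lambda\}$ and summing over $m-m'=a\neq0$, a Parseval identity on $\mathbb{Z}_M$ (namely $\sum_{a\in\mathbb{Z}_M}|\sum_{\ell\in L_k}e^{2\pi i\ell a/M}|^2=M|L_k|$) collapses the expression to
\begin{equation*}
\mathbb{E}(\|\tilde S\|_{HS}^2)=|\Lambda|-\frac{1}{M}\sum_{k}|L_k|^2.
\end{equation*}
A single application of Cauchy–Schwarz, $\sum_k|L_k|^2\ge\frac{1}{M}\big(\sum_k|L_k|\big)^2=|\Lambda|^2/M$, then yields the clean bound $\mathbb{E}(\|\tilde S\|_{HS}^2)\le|\Lambda|\big(1-\tfrac{|\Lambda|}{M^2}\big)$.

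Finally I would invoke Markov's inequality for the nonnegative variable $\|\tilde S\|_{HS}^2$: choosing the threshold $t=\frac{1}{\varepsilon}|\Lambda|(1-|\Lambda|/M^2)$ gives $\mathbb{P}(\|\tilde S\|_{HS}^2>t)\le\varepsilon$, so with probability at least $1-\varepsilon$ we have $\|\tilde S\|_{HS}\le\sqrt{\frac{|\Lambda|}{\varepsilon}(1-|\Lambda|/M^2)}$, which combined with the first display completes the proof. The main obstacle is the second-moment computation: correctly identifying which of the four-fold products of Steinhaus phases survive in expectation (the exact-pairing step) and then recognizing the resulting time-shift-column sum as a Parseval identity. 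Once that structure is exposed, the Cauchy–Schwarz step and Markov's inequality are routine.
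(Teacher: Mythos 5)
Your proposal is correct and follows essentially the same route as the paper: the paper's proof is exactly Markov's inequality applied to $\|H\|_2^2\le\|H\|_F^2=\Tr H^2$ for the centered frame operator $H=\Phi_\Lambda\Phi_\Lambda^*-\frac{|\Lambda|}{M}I_M$, with the second-moment computation reducing (via the same pairing condition $k=k'$ for $m\ne m'$) to $|\Lambda|-\frac{1}{M}\sum_k|A_k|^2$ and the same Cauchy--Schwarz step. The only cosmetic difference is that you evaluate the inner sum over $m-m'\ne 0$ by Parseval, whereas the paper evaluates the geometric sums directly.
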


We note that, the bound obtained in Theorem \ref{th_m2_max_sing_val_Gabor} is tight for a full Gabor frame, \linebreak when~${\Lambda = \mathbb{Z}_M\times \mathbb{Z}_M}$. In the case when $|\Lambda| = \alpha M^2$, for some $\alpha\in (0,1)$, the proven bound gives $\sigma_{\max}^2(\Phi_\Lambda^*)\le \left(\alpha + \sqrt{\frac{\alpha(1 - \alpha)}{\varepsilon}}\right)M = \left(1 + \sqrt{\frac{(1 - \alpha)}{\alpha\varepsilon}}\right)\frac{|\Lambda|}{M}$ with probability at least $1 - \varepsilon$. That is, the bound on $\sigma_{\max}^2(\Phi_\Lambda^*)$ in this case is the same (up to a constant), as the one obtained in Theorem \ref{th_latala} for matrices with independent identically distributed entries with bounded fourth moment.

\medskip

In this paper, we also obtain bounds on the extreme singular values of the analysis matrix of a Gabor frame with a randomly selected frame set $\Lambda$.  Roughly speaking, the obtained result shows that, for any $\epsilon\in (0,1)$, a randomly selected subframe $(g, \Lambda)$ of the full Gabor frame $(g, \mathbb{Z}_M\times \mathbb{Z}_M)$ with $|\Lambda| = O(M^{1+\epsilon}\log M)$ is well-conditioned with high probability.

\begin{theorem}\label{th_sing_val_rand_lambda}
Let $g\in \mathbb{C}^M$ be a Steinhaus window. For any fixed even $m\in \mathbb{N}$, consider a Gabor system $(g, \Lambda)$ with a random set $\Lambda\subset \mathbb{Z}_M\times \mathbb{Z}_M$ constructed so that events $\{(k,\ell)\in \Lambda\}$ are independent for all $(k,\ell)\in \mathbb{Z}_M\times \mathbb{Z}_M$ and have probability $\tau = \frac{C\log M}{M^{\frac{m-1}{m}}}$, where $C>0$ is a sufficiently large constant depending only on $m$. Then, with high probability (with respect to the choice of $\Lambda$),
\begin{equation*}
\mathbb{P}\left\lbrace\frac{|\Lambda|}{M}(1-\delta) \le \sigma_{\min}^2(\Phi_\Lambda^*) \le \sigma_{\max}^2(\Phi_\Lambda^*)\le \frac{|\Lambda|}{M}(1+\delta)\right\rbrace\ge 1 - \varepsilon,
\end{equation*}
\noindent where $\varepsilon\in (0,1)$ depends on $m$, $\delta$, and the choice of $C$.
\end{theorem}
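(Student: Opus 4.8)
The plan is to pass to the frame operator and show it concentrates around a scalar multiple of the identity. Writing $S=\Phi_\Lambda\Phi_\Lambda^*=\sum_{(k,\ell)\in\Lambda}\pi(k,\ell)g\,(\pi(k,\ell)g)^*$, one has $\sigma_{\min}^2(\Phi_\Lambda^*)=\lambda_{\min}(S)$ and $\sigma_{\max}^2(\Phi_\Lambda^*)=\lambda_{\max}(S)$, so the assertion is equivalent to the operator-norm bound $\|S-\tfrac{|\Lambda|}{M}I\|\le\tfrac{|\Lambda|}{M}\delta$. First I would record two structural facts. Since $g$ is Steinhaus, the entries of $\pi(k,\ell)g$ have independent uniform phases, so $\mathbb{E}_g\big[\pi(k,\ell)g\,(\pi(k,\ell)g)^*\big]=\tfrac1M I$ and hence $\mathbb{E}_g[S]=\tfrac{|\Lambda|}{M}I$; this identifies the correct center. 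Second, the full Gabor system is tight: $\sum_{(k,\ell)\in\mathbb{Z}_M\times\mathbb{Z}_M}\pi(k,\ell)g\,(\pi(k,\ell)g)^*=M\|g\|_2^2\,I=MI$, while each summand $\pi(k,\ell)g\,(\pi(k,\ell)g)^*$ is a rank-one positive matrix of norm $\|\pi(k,\ell)g\|_2^2=1$.

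For the concentration I would use the trace moment method with the fixed even exponent $m$. Setting $X_{(k,\ell)}=\pi(k,\ell)g\,(\pi(k,\ell)g)^*-\tfrac1M I$ (note the diagonal of $X_{(k,\ell)}$ vanishes), I would bound $\|S-\mathbb{E}_gS\|^m\le\Tr\big((S-\mathbb{E}_gS)^m\big)=\sum\Tr\big(X_{(k_1,\ell_1)}\cdots X_{(k_m,\ell_m)}\big)$, the sum running over $m$-tuples of indices in $\Lambda$, and then take the expectation over $g$. Expanding each factor entrywise along a cyclic path $p_1,\dots,p_m$ with $p_j\neq p_{j+1}$, the Steinhaus average is nonzero only when the translation indices satisfy the multiset matching $\{p_j-k_j\}_j=\{p_{j+1}-k_j\}_j$, while the modulation indices contribute exponential sums $\prod_j e^{2\pi i\ell_j(p_j-p_{j+1})/M}$. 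This reduces $\mathbb{E}_g\Tr\big((S-\mathbb{E}_gS)^m\big)$ to a purely combinatorial count of admissible index configurations.

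Next I would bring in the randomness of $\Lambda$. Because the events $\{(k,\ell)\in\Lambda\}$ are independent Bernoulli$(\tau)$, the number of admissible tuples drawn from $\Lambda$ concentrates around $\tau^{\,r}$ times the corresponding full-grid count, where $r$ is the number of distinct time-frequency indices in the configuration; simultaneously $|\Lambda|=\sum_{(k,\ell)}\mathbf{1}\{(k,\ell)\in\Lambda\}$ concentrates around $\tau M^2$ by a scalar Chernoff bound, so that $\tfrac{|\Lambda|}{M}\approx\tau M$. Estimating the dominant (fully paired) configurations, comparing the resulting bound on $\mathbb{E}_g\Tr\big((S-\mathbb{E}_gS)^m\big)$ against the target $\big(\tfrac{|\Lambda|}{M}\delta\big)^m$, and applying Markov's inequality at order $m$ gives the failure probability; the exponent $(m-1)/m$ and the $\log M$ in $\tau$ are exactly what is needed for this comparison to close once $C$ is large, and the triangle inequality together with the concentration of $|\Lambda|$ converts a bound centered at $\tau MI$ into one centered at $\tfrac{|\Lambda|}{M}I$. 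A cleaner variant of the same idea is to condition on $g$: by tightness and unit norm, $S=\sum_{(k,\ell)}\mathbf{1}\{(k,\ell)\in\Lambda\}\,A_{(k,\ell)}$ is a sum of independent positive matrices with $\|A_{(k,\ell)}\|=1$ and $\sum A_{(k,\ell)}=MI$, so $\mathbb{E}_\Lambda[S\mid g]=\tau MI$, and a matrix Bernstein/Chernoff inequality applies directly and for every window $g$.

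The main obstacle is the combinatorial moment estimate. The delicate points are to classify the translation-index matchings that survive the Steinhaus averaging, to control the modulation exponential sums attached to each matching, and---jointly with this---to bound uniformly the number of configurations realized by a random $\Lambda$, ruling out atypical additive coincidences that would inflate the higher-order terms. Balancing the leading paired configurations against the ambient dimension factor $M$ is what pins down the admissible sparsity $\tau$ and forces the use of the $m$-th rather than the second moment; the conditioning-on-$g$ route trades this combinatorics for the verification that matrix Bernstein may be invoked and for the bookkeeping relating $\tau M$ to $\tfrac{|\Lambda|}{M}$.
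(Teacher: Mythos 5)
Your primary route is the same as the paper's: reduce to $\|H\|\le\frac{|\Lambda|}{M}\delta$ for $H=\Phi_\Lambda\Phi_\Lambda^*-\frac{|\Lambda|}{M}I_M$, bound $\|H\|^m\le\Tr(H^m)$ for the fixed even $m$, expand along cyclic index paths, and observe that the Steinhaus average kills every term except those where the translation indices admit a matching $j_t-k_t=j_{\alpha(t)}-k_{\alpha(t)-1}$, leaving exponential sums in the modulation indices. But you stop exactly where the work is: you name the classification of surviving matchings, the control of the modulation sums, and the count of admissible configurations as ``the main obstacle'' without carrying them out. The paper resolves these three points as follows. The matchings are parametrized by permutations $\alpha\in\Sigma_m$; decomposing $\alpha$ into $s$ disjoint cycles, each cycle yields a rank-deficient linear system with $M$ solutions in the $j$'s, solvable only under one nontrivial linear constraint on the $k$'s per cycle (except the last), so the configuration count is $\sum_s S(m,s)M^sM^{m-s+1}=m!\,M^{m+1}$ up to the factor $M^{-m}$ from the Steinhaus expectation. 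The modulation sums $\sum_{\ell\in A_k}e^{2\pi i\ell q/M}$, $q\ne 0$, over the random fibers $A_k=\{\ell:(k,\ell)\in\Lambda\}$ are bounded by $C'\log M$ uniformly in $q$ via a Fourier-bias concentration bound for random subsets (Corollary A.5); this is where the randomness of $\Lambda$, the $\log M$ in $\tau$, and the outer ``with high probability over $\Lambda$'' all enter. Your alternative suggestion of estimating the number of admissible tuples in $\Lambda$ by $\tau^r$ times the full-grid count is not what the paper does and would require a separate uniform concentration argument over all configurations; as stated it is a heuristic, not a proof.

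Your parenthetical ``cleaner variant'' --- condition on $g$, write $\Phi_\Lambda\Phi_\Lambda^*=\sum_{(k,\ell)}\mathbf{1}\{(k,\ell)\in\Lambda\}\,\pi(k,\ell)g(\pi(k,\ell)g)^*$ as a sum of independent PSD rank-one matrices with norm $\|g\|_2^2=1$ summing to $M I$ by tightness of the full Gabor system, and apply a matrix Chernoff bound --- is a genuinely different argument, and it does go through: the mean is $\tau M I$, so the failure probability is of order $M e^{-c_\delta\tau M}$, which with $\tau M=CM^{1/m}\log M$ is far smaller than the paper's bound $(2C'/C)^m m!\,\delta^{-m}$; after relating $\tau M$ to $|\Lambda|/M$ by a scalar Chernoff bound one recovers the statement. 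This route avoids the combinatorics entirely, works for every unit-norm window (not just Steinhaus), and would in fact prove the theorem already for $\tau=C\log M/M$, essentially settling Conjecture 5.1 up to the logarithmic factor. If you pursue this problem further, that is the variant worth writing down in full.
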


We note that these bounds show the same asymptotic behavior as bounds on the extreme singular values of matrices with independent entries obtained in Theorem \ref{th_latala} and Theorem \ref{th_rudelson_vershynin}. This observation suggests that, for most of the choices of the frame set $\Lambda$, random time-frequency structured matrices are nearly as well-conditioned, as random matrices with independent Gaussian entries.

\section{Gabor analysis matrices with structured frame set $\Lambda$}\label{sing_val_sec}

Before we discuss the dependence of the optimal frame bounds on the structure and cardinality of $\Lambda$ and prove Theorems \ref{th_m2_max_sing_val_Gabor} and \ref{th_sing_val_rand_lambda}, let us consider a simple case when set $\Lambda$ has a particular structure. Namely, we start with the following observation.

\begin{proposition}\label{prop_sing_val_Gabor_regular}
Let $(g, \Lambda)$ be a Gabor system with $\Lambda = F\times \mathbb{Z}_M$  for some $F\subset \mathbb{Z}_M$, $F\ne \emptyset$, and a window $g\in \mathbb{C}^M$. Then $(g, \Lambda)$ is a frame if and only if $\min_{m\in \mathbb{Z}_M}\{||g_{F_m}||_2\}\ne 0$, where $g_{F_m}$ is the restriction of the vector $g$ to the set of coefficients $F_m = \{m - k\}_{k\in F}\subset \mathbb{Z}_M$. 

Moreover, in this case the optimal lower and upper frame bounds for $(g, \Lambda)$ are \linebreak$A=M\min_{m\in \mathbb{Z}_M}\{||g_{F_m}||_2^2\}$ and $B=M\max_{m\in \mathbb{Z}_M}\{||g_{F_m}||_2^2\}$, respectively.
\end{proposition}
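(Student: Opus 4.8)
The plan is to compute the frame operator $\Phi_\Lambda \Phi_\Lambda^*$ directly and to show that the product structure $\Lambda = F\times\mathbb{Z}_M$ forces it to be diagonal; both assertions then follow immediately by reading off its diagonal entries.

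First I would fix $x\in\mathbb{C}^M$ and expand a single frame coefficient. Since $(\pi(k,\ell)g)(m) = (M_\ell T_k g)(m) = e^{2\pi i\ell m/M}g(m-k)$, one has
\[
\langle x, \pi(k,\ell)g\rangle = \sum_{m\in\mathbb{Z}_M} x(m)\,\overline{g(m-k)}\,e^{-2\pi i\ell m/M}.
\]
The key observation is that, for each fixed $k$, this is precisely the (unnormalized) discrete Fourier transform of the sequence $m\mapsto x(m)\overline{g(m-k)}$ evaluated at the frequency $\ell$. Because $\Lambda = F\times\mathbb{Z}_M$ contains every modulation index $\ell\in\mathbb{Z}_M$ for each $k\in F$, I can first sum the squared coefficients over $\ell$ and apply Parseval's identity for the DFT, which introduces a factor of $M$:
\[
\sum_{\ell\in\mathbb{Z}_M}|\langle x,\pi(k,\ell)g\rangle|^2 = M\sum_{m\in\mathbb{Z}_M}|x(m)|^2\,|g(m-k)|^2.
\]

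Summing over $k\in F$ and interchanging the two finite sums then gives
\[
\sum_{(k,\ell)\in\Lambda}|\langle x,\pi(k,\ell)g\rangle|^2 = M\sum_{m\in\mathbb{Z}_M}|x(m)|^2\sum_{k\in F}|g(m-k)|^2 = M\sum_{m\in\mathbb{Z}_M}|x(m)|^2\,||g_{F_m}||_2^2,
\]
where I use that $\sum_{k\in F}|g(m-k)|^2 = \sum_{j\in F_m}|g(j)|^2 = ||g_{F_m}||_2^2$ by the definition $F_m = \{m-k\}_{k\in F}$. Since $\langle \Phi_\Lambda\Phi_\Lambda^* x, x\rangle = \sum_{(k,\ell)\in\Lambda}|\langle x,\pi(k,\ell)g\rangle|^2$, this identity shows that the frame operator is the diagonal matrix $\diag\big(M||g_{F_m}||_2^2\big)_{m\in\mathbb{Z}_M}$.

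Once diagonality is established, the conclusions are immediate. The optimal frame bounds are the extreme eigenvalues of the frame operator, i.e.\ the extreme diagonal entries, giving $A = M\min_{m}||g_{F_m}||_2^2$ and $B = M\max_{m}||g_{F_m}||_2^2$; and $(g,\Lambda)$ spans $\mathbb{C}^M$ (hence is a frame) exactly when $A>0$, that is, when $\min_m||g_{F_m}||_2\ne 0$. I do not anticipate a genuine difficulty in this argument; the only points requiring care are the bookkeeping in the Parseval step — tracking the normalization constant $M$ and justifying the interchange of the finite sums over $\ell$ and $m$ — and verifying that the index set $F_m$ lines up so that $\sum_{k\in F}|g(m-k)|^2$ is indeed the squared norm of the restriction $g_{F_m}$.
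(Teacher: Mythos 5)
Your proof is correct and follows essentially the same route as the paper: the paper computes the entries $\Phi_\Lambda\Phi_\Lambda^*(m_1,m_2)$ and uses the orthogonality relation $\sum_{\ell\in\mathbb{Z}_M}e^{2\pi i\ell(m_1-m_2)/M}=M\delta_{m_1 m_2}$ to show the frame operator is $\diag\{M\sum_{k\in F}|g(m-k)|^2\}_{m\in\mathbb{Z}_M}$, which is exactly the content of your Parseval step applied to the quadratic form $\sum_{(k,\ell)\in\Lambda}|\langle x,\pi(k,\ell)g\rangle|^2$. The only cosmetic difference is that you work with the quadratic form rather than the matrix entries, which even lets you read off the optimal bounds directly without invoking diagonality.
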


\begin{proof}
Consider the matrix $\Phi_\Lambda\in \mathbb{C}^{M\times |F| M}$ corresponding to the synthesis operator of the Gabor system $(g, \Lambda)$, where $\Lambda = F\times \mathbb{Z}_M$ with $F\subset \mathbb{Z}_M$, $F\ne \emptyset$, and $g\in \mathbb{C}^M$. That is, the vectors $\pi(\lambda)g$, $\lambda\in \Lambda$, are the columns of the matrix $\Phi_\Lambda$. Then  consider the matrix $\Phi_\Lambda\Phi_\Lambda^*$ corresponding to the frame operator of $(g, \Lambda)$. For any $m_1, m_2\in \mathbb{Z}_M$,
\begin{equation*}
\begin{split}
\Phi_\Lambda\Phi_\Lambda^*(m_1, m_2) & = \sum_{\lambda\in \Lambda}(\pi(\lambda)g)(m_1)\overline{(\pi(\lambda)g)(m_2)} \\
& = \sum_{k\in F} \sum_{\ell\in \mathbb{Z}_M} e^{2\pi i \ell (m_1 - m_2)/M}g(m_1 - k)\overline{g(m_2 - k)}\\
& = \sum_{k\in F} g(m_1 - k)\overline{g(m_2 - k)} \sum_{\ell\in \mathbb{Z}_M} e^{2\pi i \ell (m_1 - m_2)/M}.
\end{split}
\end{equation*}
\noindent Then, since $\sum_{\ell\in \mathbb{Z}_M} e^{2\pi i \ell (m_1 - m_2)/M} = 0$ for $m_1\ne m_2$, and ${\sum_{\ell\in \mathbb{Z}_M} e^{2\pi i \ell (m_1 - m_2)/M} = M}$ for ${m_1 = m_2}$, we obtain
\begin{equation*}
\Phi_\Lambda\Phi_\Lambda^*(m_1, m_2) = \left\lbrace \begin{array}{ll}
0, & m_1\ne m_2\\
M \sum_{k\in F} |g(m_1 - k)|^2, & m_1 = m_2. 
\end{array}\right.
\end{equation*}
That is, $\Phi_\Lambda\Phi_\Lambda^* = \diag\{M \sum_{k\in F} |g(m - k)|^2\}_{m\in \mathbb{Z}_M}$ is a diagonal matrix and, thus, the set $\{\sigma_m(\Phi_\Lambda^*)\}_{m\in \mathbb{Z}_M}$ of the singular values of the matrix $\Phi_\Lambda^*$, corresponding to the \linebreak analysis operator of $(g, \Lambda)$, is equal to the set $\{\sqrt{M} ||g_{F_m}||_2\}_{m\in \mathbb{Z}_M}$, where \linebreak$F_m = \{m - k\}_{k\in F}\subset \mathbb{Z}_M$ and $g_S$ denotes the restriction of the vector $g$ to a set of coefficients~$S\subset \mathbb{Z}_M$.

In particular, $(g, \Lambda)$ is a frame if and only if all the diagonal entries of $\Phi_\Lambda\Phi_\Lambda^*$ are nonzero, that is, if and only if $\min_{m\in \mathbb{Z}_M}\{||g_{F_m}||_2\}\ne 0$. Moreover, we have
\begin{equation*}
\begin{split}
\sigma_{\min}(\Phi_\Lambda^*) & = \min_{m\in \mathbb{Z}_M}\sigma_m(\Phi_\Lambda^*) = \sqrt{M}\min_{m\in \mathbb{Z}_M}\{||g_{F_m}||_2\};\\
\sigma_{\max}(\Phi_\Lambda^*) & = \max_{m\in \mathbb{Z}_M}\sigma_m(\Phi_\Lambda^*) = \sqrt{M}\max_{m\in \mathbb{Z}_M}\{||g_{F_m}||_2\}.
\end{split}
\end{equation*}
That is, $M\min_{m\in \mathbb{Z}_M}\{||g_{F_m}||_2^2\}$ and $M\max_{m\in \mathbb{Z}_M}\{||g_{F_m}||_2^2\}$ are the optimal lower and upper frame bounds for $(g, \Lambda)$, respectively.
\end{proof}

\begin{remark} We note that an analogous result is true for the the case when the considered  set $\Lambda$ is of the form $\Lambda=\mathbb{Z}_M\times F$, for some $F\subset \mathbb{Z}_M$. Indeed, let $W_M = \frac{1}{\sqrt{M}}\{e^{-2\pi i k\ell/M}\}_{k,\ell\in \mathbb{Z}_M}$ be the normalized Fourier matrix, and consider the Gabor frame $(g, \Lambda')$ with a window $g$ and $\Lambda' = (-F)\times \mathbb{Z}_M$. Since $W_M M_{\ell} T_k g = e^{2\pi i k\ell /M} M_{-k} T_{\ell} W_M g$, we have 
\begin{equation*}
\begin{split}
W_M \Phi_{(g, \Lambda')} \Phi_{(g, \Lambda')}^* W_M^* (m_1,m_2) & = \sum_{(k,\ell)\in \Lambda} M_{-k}T_{\ell}W_M g(m_1)\overline{M_{-k}T_{\ell}W_M g(m_2)} \\
& =  \Phi_{(W_M g, \Lambda)} \Phi_{(W_M g, \Lambda)}^*(m_1,m_2).
\end{split}
\end{equation*}
That is, $W_M \Phi_{(g, \Lambda')} \Phi_{(g, \Lambda')}^* W_M^* = \Phi_{(W_M g, \Lambda)} \Phi_{(W_M g, \Lambda)}^*$. Thus, 
\begin{equation*}
\begin{gathered}
\sigma_{\min}(\Phi_{(W_Mg, \Lambda)}^*) = \sigma_{\min}(\Phi_{(g, \Lambda')}^*),\\
\sigma_{\max}(\Phi_{(W_Mg, \Lambda)}^*) = \sigma_{\max}(\Phi_{(g, \Lambda')}^*).
\end{gathered}
\end{equation*}

\end{remark}

\medskip

Let us now consider several particular classes of random Gabor windows and use Proposition~\ref{prop_sing_val_Gabor_regular} to estimate the frame bounds for the respective Gabor frames with the frame set of the form ${\Lambda = F\times \mathbb{Z}_M}$.

\begin{example}\label{example_structured}\mbox{}
\begin{enumerate}
\item[(i)] {\it Steinhaus window.} We first consider the case when the window $g$ is chosen so that ${g(m) = \frac{1}{\sqrt{M}}e^{2\pi i y_m}}$, $m\in \mathbb{Z}_M$, and $y_m$ are independent uniformly distributed on~$[0,1)$. Then, for each $m\in \mathbb{Z}_M$, $M \sum_{k\in F} |g(m - k)|^2 = |F|$, and thus $\Phi_\Lambda\Phi_\Lambda^* = |F|I_M$. That is, $(g, \Lambda)$ is a tight frame in this case.

\item[(ii)] {\it Gaussian window.} For a Gaussian window $g\sim \mathcal{C}\mathcal{N}\left( 0, \frac{1}{M}I_M\right)$, we have 
\begin{equation*}
\sigma_m^2(\Phi_\Lambda^*) = M \sum_{k\in F} |g(m - k)|^2 = \sum_{k\in F} \left(\frac{1}{2}2M r(m - k)^2 + \frac{1}{2}2M s(m - k)^2\right),
\end{equation*}
\noindent where $r(m-k) = \Re(g(m - k))$ denotes the real part of $g(m - k)$, and $s(m-k) = \Im(g(m - k))$ denotes its imaginary part. Since, for $k\in F$, $\sqrt{2M}r(m-k),~ \sqrt{2M}s(m-k) \sim \text{i.i.d. } \mathcal{N}(0,1)$ are independent standard Gaussian random variables, we can apply Lemma \ref{chi_square} to obtain that, for any $t>0$,
\begin{equation*}
\begin{split}
& \mathbb{P}\left\lbrace \sigma_m^2(\Phi_\Lambda^*) \ge |F| + \sqrt{2|F|t} + t \right\rbrace \le e^{-t};\\
& \mathbb{P}\left\lbrace \sigma_m^2(\Phi_\Lambda^*) \le |F| - \sqrt{2|F|t} \right\rbrace \le e^{-t}.
\end{split}
\end{equation*}
\noindent Then, setting $t =2 |F|$ in the first equation and $t = \frac{1}{8}|F|$ in the second one, we obtain
\begin{equation*}
\begin{split}
& \mathbb{P}\left\lbrace \sigma_m^2(\Phi_\Lambda^*) \ge 5|F| \right\rbrace \le e^{-2|F|};\\
& \mathbb{P}\left\lbrace \sigma_m^2(\Phi_\Lambda^*) \le \frac{1}{2}|F| \right\rbrace \le e^{-\frac{|F|}{8}}.
\end{split}
\end{equation*}
Suppose now that $|F|\ge C\log M$, for some sufficiently large constant $C>0$. Then, combining the probability estimates obtained above and taking the union bound over all $m\in \mathbb{Z}_M$, we obtain that, with high probability,
\begin{equation*}
\frac{1}{2}|F|< \sigma_m^2(\Phi_\Lambda^*)< 5|F|,
\end{equation*}
\noindent for all $m\in \mathbb{Z}_M$. In particular, for the frame bounds of $(g, \Lambda)$ we have 
\begin{equation}\label{example_gaussian_wind}
\frac{1}{2}|F|< \sigma_{\min}^2(\Phi_\Lambda^*) \le \sigma_{\max}^2(\Phi_\Lambda^*)< 5|F|.
\end{equation}

\item[(iii)] {\it Window, uniformly distributed on $\mathbb{S}^{M-1}$.} It is a well-known fact that a window $g$, uniformly distributed on the unit sphere $\mathbb{S}^{M-1}$, can be written in the form $g = h/||h||_2$, where $h\sim \mathcal{C}\mathcal{N}\left(0,\frac{1}{M} I_M\right)$ \cite{marsaglia1972choosing}. Moreover, Lemma \ref{lemma_norm_gaussian} shows that, for some $C>0$, $\frac{1}{2}\le ||h||_2\le 2$ with probability at least $1 - e^{-CM}$. Thus, with the same probability, 
\begin{equation*}
\frac{1}{4} M \sum_{k\in F} |h(m - k)|^2 \le M \sum_{k\in F} |g(m - k)|^2 \le  4M \sum_{k\in F} |h(m - k)|^2.
\end{equation*}
\noindent Combining this with \eqref{example_gaussian_wind}, we obtain that with high probability
\begin{equation*}
\frac{1}{8}|F|< \sigma_{\min}^2(\Phi_\Lambda^*) \le \sigma_{\max}^2(\Phi_\Lambda^*)< 20|F|.
\end{equation*}
\end{enumerate}
\end{example}

\medskip
The examples above show that, in the case when $\Lambda$ has a regular structure and window $g$ is random, the Gabor frame $(g,\Lambda)$ has frame bounds that are quite close to each other, and, thus, is well-conditioned.  

\section{Gabor analysis matrices with general frame set $\Lambda$}\label{sec_proof_of_main_results}

In this section we are going to consider the case when $\Lambda$ is a generic subset of $\mathbb{Z}_M\times\mathbb{Z}_M$ and prove Theorems \ref{th_m2_max_sing_val_Gabor} and \ref{th_sing_val_rand_lambda}. We start our consideration by showing the following technical lemma, which follows the idea of \cite[Lemma~3.4]{pfander2010sparsity}. Here and in the sequel, we denote the identity $M\times M$ matrix by $I_M$.

\begin{lemma}\label{lemma_trace_formula}
Let $g$ be a Steinhaus window, that is, $g(j) = \frac{1}{\sqrt{M}}e^{2\pi i y_j}$, $j\in \mathbb{Z}_M$, with $y_j$ independent uniformly distributed on~$[0,1)$. Consider a Gabor system $(g, \Lambda)$ with $\Lambda\subset \mathbb{Z}_M\times \mathbb{Z}_M$. Then, for any $m\in \mathbb{N}$ and $\delta >0$,
\begin{equation*}
\mathbb{P}\left\lbrace\frac{|\Lambda|}{M}(1-\delta) \le \sigma_{\min}^2(\Phi_\Lambda^*) \le \sigma_{\max}^2(\Phi_\Lambda^*)\le \frac{|\Lambda|}{M}(1+\delta)\right\rbrace \geq 1 - \frac{M^{2m}}{|\Lambda|^{2m}}\delta^{-2m}\mathbb{E}(\Tr H^{2m}),
\end{equation*}
\noindent where $H = \Phi_\Lambda \Phi_\Lambda ^*- \frac{|\Lambda|}{M}I_M $. Furthermore, for any $m\in \mathbb{N}$,
\begin{equation*}
\mathbb{E}\left(\Tr H^m \right) = \sum_{\substack{j_1, j_2, \dots, j_m\in \mathbb{Z}_M,\\j_1\ne j_2\ne\dots\ne j_m\ne j_{1}}}\sum_{(k_1, \ell_1)\in \Lambda}  \dots \sum_{(k_m, \ell_m)\in \Lambda} e^{\frac{2\pi i}{M} \sum_{t = 1}^m \ell_t (j_t - j_{t+1})} E_{\substack{j_1\dots j_m\\k_1\dots k_m}},
\end{equation*}
\noindent where $E_{\substack{j_1\dots j_m\\k_1\dots k_m}} =\frac{1}{M^{m}}$, if there exists a bijection $\alpha:\{1,\dots,m\}\to\{1,\dots,m\}$, such that ${j_t - k_t = j_{\alpha(t)} - k_{\alpha(t) - 1}}$, for all $t\in\{1,\dots,m\}$; and $E_{\substack{j_1\dots j_m\\k_1\dots k_m}} = 0$, otherwise.
\end{lemma}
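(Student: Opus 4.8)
The plan is to break the statement into two independent parts. The first is a concentration inequality that converts control of the moments $\mathbb{E}(\Tr H^{2m})$ into the desired high-probability sandwich for the extreme singular values; the second is an explicit combinatorial formula for $\mathbb{E}(\Tr H^m)$. I would begin with the concentration step, since it is the shorter and more conceptual one. Observe that the event
\begin{equation*}
\left\{\frac{|\Lambda|}{M}(1-\delta) \le \sigma_{\min}^2(\Phi_\Lambda^*) \le \sigma_{\max}^2(\Phi_\Lambda^*)\le \frac{|\Lambda|}{M}(1+\delta)\right\}
\end{equation*}
is precisely the event that every eigenvalue of $\Phi_\Lambda\Phi_\Lambda^*$ lies in the interval $\frac{|\Lambda|}{M}(1\pm\delta)$, which in turn says $\|H\|_2 \le \frac{|\Lambda|}{M}\delta$, where $H = \Phi_\Lambda\Phi_\Lambda^* - \frac{|\Lambda|}{M}I_M$. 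Since $H$ is Hermitian, $\|H\|_2^{2m} \le \Tr H^{2m}$ (the operator norm to the power $2m$ is bounded by the sum of the $2m$-th powers of the eigenvalues), so by Markov's inequality applied to the nonnegative random variable $\Tr H^{2m}$,
\begin{equation*}
\mathbb{P}\left\{\|H\|_2 > \frac{|\Lambda|}{M}\delta\right\} \le \mathbb{P}\left\{\Tr H^{2m} > \left(\frac{|\Lambda|}{M}\delta\right)^{2m}\right\} \le \frac{M^{2m}}{|\Lambda|^{2m}}\delta^{-2m}\,\mathbb{E}(\Tr H^{2m}),
\end{equation*}
and taking complements gives the claimed probability bound.

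For the second part I would expand $\Tr H^m$ directly from the definition. Writing out the frame operator entrywise as in Proposition \ref{prop_sing_val_Gabor_regular}, the matrix $\Phi_\Lambda\Phi_\Lambda^*$ has $(j,j')$ entry $\sum_{(k,\ell)\in\Lambda} e^{2\pi i \ell(j-j')/M} g(j-k)\overline{g(j'-k)}$, and subtracting the diagonal shift $\frac{|\Lambda|}{M}I_M$ cancels exactly the diagonal contribution; this is what forces the constraint $j_t \ne j_{t+1}$ in the index set once one multiplies out the $m$-fold product $\Tr H^m = \sum_{j_1,\dots,j_m} H(j_1,j_2)H(j_2,j_3)\cdots H(j_m,j_1)$. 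Each factor contributes a time-frequency shift index $(k_t,\ell_t)\in\Lambda$ and a phase $e^{2\pi i \ell_t(j_t-j_{t+1})/M}$ (indices cyclic, $j_{m+1}=j_1$), which produces the exponential sum $e^{\frac{2\pi i}{M}\sum_t \ell_t(j_t-j_{t+1})}$ in the statement. The remaining ingredient is the window factor $\prod_{t=1}^m g(j_t-k_t)\overline{g(j_{t+1}-k_t)}$, and I would take its expectation over the Steinhaus randomness.

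The heart of the argument, and the step I expect to be the main obstacle, is the evaluation of $\mathbb{E}\left(\prod_{t=1}^m g(j_t-k_t)\overline{g(j_{t+1}-k_t)}\right) = E_{\substack{j_1\dots j_m\\k_1\dots k_m}}$ for the Steinhaus window. Because $g(j) = \frac{1}{\sqrt{M}}e^{2\pi i y_j}$ with independent uniform phases, the expectation factors over the distinct indices $y_s$ appearing in the product, and $\mathbb{E}(e^{2\pi i n y_s}) = 1$ if $n=0$ and $0$ otherwise. Thus the expectation is $M^{-m}$ exactly when the multiset of arguments $\{j_t-k_t\}_t$ appearing with a $g$ coincides with the multiset $\{j_{t+1}-k_t\}_t$ appearing with a $\overline{g}$, so that every phase cancels, and is $0$ otherwise. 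The work here is to show that this multiset-matching condition is equivalent to the existence of a bijection $\alpha$ with $j_t - k_t = j_{\alpha(t)} - k_{\alpha(t)-1}$ for all $t$ (with indices on $k$ read cyclically), which is the stated form of $E$; I would verify this by matching each conjugated factor $\overline{g(j_{t+1}-k_t)} = \overline{g(j_{(t+1)-1} - k_{(t+1)-1}+\cdots)}$ to an unconjugated one via the permutation $\alpha$ and checking the bookkeeping on the shifted index $k_{\alpha(t)-1}$ is consistent. Care is needed to track the cyclic indexing and to confirm that no phase survives precisely under this pairing condition; once that correspondence is established, collecting the $M^{-m}$ factor and the surviving exponential sum yields the displayed formula.
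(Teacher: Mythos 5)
Your proposal is correct and follows essentially the same route as the paper's proof: the reduction to $\|H\|_2 \le \frac{|\Lambda|}{M}\delta$ followed by Markov's inequality with the bound $\|H\|_2^{2m}\le \Tr H^{2m}$ (the paper phrases this via the Frobenius norm of $H^m$, which is the same estimate), the observation that the constant modulus of the Steinhaus window makes $H$ have zero diagonal and hence forces $j_t\ne j_{t+1}$, and the evaluation of $E_{\substack{j_1\dots j_m\\k_1\dots k_m}}$ by factoring the expectation over the independent phases and reducing the nonvanishing condition to the multiset matching encoded by the bijection $\alpha$ with $j_t-k_t=j_{\alpha(t)}-k_{\alpha(t)-1}$ (indices on $k$ read cyclically, $k_0=k_m$). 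The bookkeeping you flag as the main remaining work is exactly the step the paper carries out, and it goes through as you describe.
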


\begin{proof}
First, for $H = \Phi_\Lambda \Phi_\Lambda ^*-  \frac{|\Lambda|}{M}I_M$, we note that 
\begin{equation*}
\mathbb{P}\left\lbrace\frac{|\Lambda|}{M}(1-\delta) \le \sigma_{\min}^2(\Phi_\Lambda^*) \le \sigma_{\max}^2(\Phi_\Lambda^*)\le \frac{|\Lambda|}{M}(1+\delta)\right\rbrace  = \mathbb{P}\left\lbrace ||H||_2 \le \frac{|\Lambda|}{M}\delta\right\rbrace.
\end{equation*}

Using Markov’s inequality, the fact that the Frobenius norm majorizes the operator norm, and the fact that $H$ is self-adjoint, for any $m\in \mathbb{N}$ we have
\begin{align*}
\mathbb{P} \left\lbrace||H||_2> \frac{|\Lambda|}{M}\delta\right\rbrace & = \mathbb{P} \left\lbrace||H||_2^{2m}> \frac{|\Lambda|^{2m}}{M^{2m}}\delta^{2m}\right\rbrace \le \frac{M^{2m}}{|\Lambda|^{2m}}\delta^{-2m}\mathbb{E}(||H||_2^{2m})  \\
& = \frac{M^{2m}}{|\Lambda|^{2m}}\delta^{-2m}\mathbb{E}(||H^m||_2^{2}) \le \frac{M^{2m}}{|\Lambda|^{2m}}\delta^{-2m}\mathbb{E}(||H^m||_F^{2}) \\
& = \frac{M^{2m}}{|\Lambda|^{2m}}\delta^{-2m}\mathbb{E}(\Tr H^{2m}).\numberthis \label{eq_trace_estimate}
\end{align*}
\noindent That is, to conclude the desired result, we aim to estimate the trace expectation $\mathbb{E}(\Tr H^{2m})$. For any $j_1, j_2\in \mathbb{Z}_M$, 
\begin{equation*}
\Phi_\Lambda \Phi_\Lambda^* (j_1, j_2) = \sum_{(k, \ell)\in \Lambda}e^{2\pi i \ell (j_1 - j_2)/M}g(j_1 - k)\overline{g(j_2 - k)}.
\end{equation*}
Thus, since $|g(j)| = \frac{1}{\sqrt{M}}$, for all $j\in \mathbb{Z}_M$, for $H$ we have
\begin{equation*}
H (j_1, j_2) = \left\lbrace \begin{array}{ll}
\sum_{(k, \ell)\in \Lambda}e^{2\pi i \ell (j_1 - j_2)/M}g(j_1 - k)\overline{g(j_2 - k)}, & j_1\ne j_2;\\
0, & j_1 = j_2.
\end{array}\right.
\end{equation*}
Then, for $j_1,\dots, j_{m+1}\in \mathbb{Z}_M$, we recursively obtain
\begin{equation*}
\Scale[0.93]{
\begin{split}
& H^2 (j_1, j_3) = \sum_{j_2\in \mathbb{Z}_M} H(j_1,j_2)H(j_2, j_3) \\
& = \sum_{\substack{j_2\in \mathbb{Z}_M,\\j_2\ne j_1,j_3}}\sum_{(k_1, \ell_1)\in \Lambda} \sum_{(k_2, \ell_2)\in \Lambda} e^{\frac{2\pi i}{M} (\ell_1 (j_1 - j_2) + \ell_2 (j_2 - j_3))} g(j_1 - k_1)\overline{g(j_2 - k_1)}g(j_2 - k_2)\overline{g(j_3 - k_2)};\\
& H^3 (j_1, j_4) = \sum_{j_3\in \mathbb{Z}_M} H^2(j_1,j_3)H(j_3, j_4) \\
& = \sum_{\substack{j_3\in \mathbb{Z}_M,\\j_3\ne j_4}}\sum_{\substack{j_2\in \mathbb{Z}_M,\\j_2\ne j_1,j_3}}\sum_{(k_1, \ell_1)\in \Lambda} \sum_{(k_2, \ell_2)\in \Lambda} \sum_{(k_3, \ell_3)\in \Lambda} e^{\frac{2\pi i}{M} \sum_{t = 1}^3 \ell_t (j_t - j_{t+1})} \prod_{t = 1}^3 g(j_t - k_t)\overline{g(j_{t+1} - k_t)};
\end{split}}
\end{equation*}
and, in general,
\begin{equation*}
\Scale[0.93]{
\begin{split}
& H^m (j_1, j_{m+1}) = \sum_{j_m\in \mathbb{Z}_M} H^{m-1}(j_1,j_m)H(j_m, j_{m+1}) \\
& = \sum_{\substack{j_m\in \mathbb{Z}_M,\\j_m\ne j_{m+1}}}\dots\sum_{\substack{j_3\in \mathbb{Z}_M,\\j_3\ne j_4}}\sum_{\substack{j_2\in \mathbb{Z}_M,\\j_2\ne j_1,j_3}}\sum_{(k_1, \ell_1)\in \Lambda}  \dots \sum_{(k_m, \ell_m)\in \Lambda} e^{\frac{2\pi i}{M} \sum_{t = 1}^m \ell_t (j_t - j_{t+1})} \prod_{t = 1}^m g(j_t - k_t)\overline{g(j_{t+1} - k_t)}.
\end{split}}
\end{equation*}
Thus, for the trace of the matrix $H^m$, we have
\begin{equation*}
\Scale[0.93]{
\begin{split}
& \Tr(H^m) = \sum_{\substack{j_1, j_2, \dots, j_m\in \mathbb{Z}_M,\\j_1\ne j_2\ne\dots\ne j_m\ne j_{1}}}\sum_{(k_1, \ell_1)\in \Lambda}  \dots \sum_{(k_m, \ell_m)\in \Lambda} e^{\frac{2\pi i}{M} \sum_{t = 1}^m \ell_t (j_t - j_{t+1})} \prod_{t = 1}^m g(j_t - k_t)\overline{g(j_{t+1} - k_t)}, \text{ and}\\
& \mathbb{E}\left(\Tr(H^m)\right) = \sum_{\substack{j_1, j_2, \dots, j_m\in \mathbb{Z}_M,\\j_1\ne j_2\ne\dots\ne j_m\ne j_{1}}}\sum_{(k_1, \ell_1)\in \Lambda}  \dots \sum_{(k_m, \ell_m)\in \Lambda} e^{\frac{2\pi i}{M} \sum_{t = 1}^m \ell_t (j_t - j_{t+1})} E_{\substack{j_1\dots j_m\\k_1\dots k_m}},
\end{split}}
\end{equation*}
\noindent where $ E_{\substack{j_1\dots j_m\\k_1\dots k_m}} = \mathbb{E}\left(\prod_{t = 1}^m g(j_t - k_t)\overline{g(j_{t+1} - k_t)}\right)$.

Let us compute $E_{\substack{j_1\dots j_m\\k_1\dots k_m}}$ now. Since $g(j)$, $j\in \mathbb{Z}_M$, are independent, the expectation can be factored into a product of the form
\begin{equation*}
\mathbb{E}\left(\prod_{t = 1}^m g(j_t - k_t)\overline{g(j_{t+1} - k_t)}\right) = \prod_{j\in \mathbb{Z}_M}\mathbb{E}\left( g(j)^{\mu_j}\overline{g(j)}^{\nu_j} \right),
\end{equation*}
\noindent for some $\mu_j, \nu_j\in \mathbb{N}\cup \{0\}$. Moreover, since $\sqrt{M}g(j)$ is uniformly distributed on the unit torus $\{z\in \mathbb{C}: ||z||_2 = 1\}$ and $\mathbb{E}\left( g(j) \right) = 0$, we have
\begin{equation*}
\mathbb{E}\left( g(j)^{\mu_j}\overline{g(j)}^{\nu_j} \right) = \left\lbrace \begin{array}{ll}
\mathbb{E}\left( |g(j)|^{2\mu_j} \right) = \frac{1}{M^{\mu_j}}, & \mu_j = \nu_j;\\
0, & \mu_j \ne \nu_j.
\end{array}\right.
\end{equation*}
Thus, under the convention that $k_{0} = k_{m}$,
\begin{equation*}
E_{\substack{j_1\dots j_m\\k_1\dots k_m}} = \left\lbrace \begin{array}{ll}
\frac{1}{M^{m}}, & \text{if} ~ \exists \text{ bijection } \alpha: \{1,\dots,m\} \to \{1,\dots,m\}, \\
&\text{s.t. }\forall t\in \{1,\dots,m\} ~  j_t - k_t = j_{\alpha(t)} - k_{\alpha(t) - 1}; \\
0, & \text{otherwise.}
\end{array}\right.
\end{equation*}
This concludes the proof.
\end{proof}

\subsection{Proof of Theorem \ref{th_m2_max_sing_val_Gabor}}

To prove Theorem \ref{th_m2_max_sing_val_Gabor}, we apply Lemma \ref{lemma_trace_formula} with $m = 1$. We obtain that, for $H = \Phi_\Lambda \Phi_\Lambda ^* - \frac{|\Lambda|}{M}I_M $ and any $\delta >0$,
\begin{equation*}
\mathbb{P}\left\lbrace\sigma_{\max}^2(\Phi_\Lambda^*)> \frac{|\Lambda|}{M}(1+\delta)\right\rbrace \le \frac{M^{2}}{|\Lambda|^{2}}\delta^{-2}\mathbb{E}(\Tr H^{2}).
\end{equation*}
\noindent And, moreover, we have
\begin{equation*}
\mathbb{E}\left(\Tr H^2 \right) = \sum_{\substack{j_1, j_2\in \mathbb{Z}_M,\\j_1\ne j_2}}\sum_{(k_1, \ell_1)\in \Lambda} \sum_{(k_2, \ell_2)\in \Lambda} e^{\frac{2\pi i}{M} (\ell_1 - \ell_2) (j_1 - j_2)} E_{\substack{j_1 j_2\\k_1 k_2}},
\end{equation*}
\noindent where $E_{\substack{j_1 j_2\\k_1 k_2}} =\frac{1}{M^{2}}$, if there exists a bijection $\alpha: \{1,2\} \to \{1,2\}$, such that, for every $t\in \{1,2\}$, $j_t - k_t = j_{\alpha(t)} - k_{\alpha(t) - 1}$; and $E_{\substack{j_1 j_2\\k_1 k_2}} = 0$, otherwise. If we have $j_1 - k_1 = j_{2} - k_{1}$, or $j_2 - k_2 = j_{1} - k_{2}$, it follows that $j_1 = j_2$, which is a contradiction. Thus we have
\begin{equation*}
\begin{array}{lcl}
\left\lbrace \begin{array}{l}
j_1 - k_1 = j_{1} - k_{2}\\
j_2 - k_2 = j_{2} - k_{1}
\end{array}\right. & \Leftrightarrow & k_1 = k_2.
\end{array}
\end{equation*}
For each $k\in \mathbb{Z}_M$, let us consider the set $A_k = \{\ell \in \mathbb{Z}_M, (k, \ell)\in \Lambda\}$. Clearly, $\sum_{k\in \mathbb{Z}_M}|A_k| = |\Lambda|$. Then, for the expectation of $\Tr H^2$, we have the following.
\begin{equation*}
\begin{split}
\mathbb{E}\left(\Tr H^2 \right) & = \frac{1}{M^2}\sum_{\substack{j_1, j_2\in \mathbb{Z}_M,\\j_1\ne j_2}}\sum_{k\in \mathbb{Z}_M}\sum_{\ell_1\in A_k} \sum_{\ell_2 \in A_k} e^{\frac{2\pi i}{M} (\ell_1 - \ell_2) (j_1 - j_2)} \\
& = \frac{1}{M^2}\sum_{j_1\in \mathbb{Z}_M}\sum_{k\in \mathbb{Z}_M}\sum_{\ell_1\in A_k}\left(\sum_{\substack{\ell_2 \in A_k\\\ell_2\ne \ell_1}}\sum_{\substack{j_2\in \mathbb{Z}_M,\\j_2\ne j_1}} e^{\frac{2\pi i}{M} (\ell_1 - \ell_2) (j_1 - j_2)} + \sum_{\substack{j_2\in \mathbb{Z}_M,\\j_2\ne j_1}} 1\right) \\
& = \frac{1}{M^2}\sum_{j_1\in \mathbb{Z}_M}\sum_{k\in \mathbb{Z}_M}\sum_{\ell_1\in A_k}\left(\sum_{\substack{\ell_2 \in A_k\\\ell_2\ne \ell_1}}(-1) + M - 1 \right)\\
& =  \frac{1}{M^2}M\sum_{k\in \mathbb{Z}_M} \sum_{\ell_1\in A_k} \left((|A_k| - 1)(-1) + M - 1 \right) = \frac{1}{M}\sum_{k\in \mathbb{Z}_M} |A_k| \left(M - |A_k| \right)\\
& = \sum_{k\in \mathbb{Z}_M} |A_k|  - \frac{1}{M} \sum_{k\in \mathbb{Z}_M} |A_k|^2 \le |\Lambda| \left(1 - \frac{|\Lambda|}{M^2} \right).
\end{split}
\end{equation*}
The last step here is due to the fact that $\sum_{k\in \mathbb{Z}_M} |A_k|^2 \ge \frac{1}{M} \left( \sum_{k\in \mathbb{Z}_M} |A_k| \right)^2$.

Then, setting $\delta = \sqrt{\frac{M^2 - |\Lambda|}{\varepsilon |\Lambda|}}$ for some $\varepsilon\in (0,1)$, we obtain
\begin{equation*}
\mathbb{P}\left\lbrace\sigma_{\max}^2(\Phi_\Lambda^*)> \frac{|\Lambda|}{M} + \sqrt{\frac{|\Lambda|}{\varepsilon}\left(1 - \frac{|\Lambda|}{M^2}\right)} \right\rbrace \le \varepsilon,
\end{equation*}
which concludes the proof on Theorem \ref{th_m2_max_sing_val_Gabor}.

\subsection{Proof of Theorem \ref{th_sing_val_rand_lambda}}

It follows from Lemma \ref{lemma_trace_formula}, that, for matrix $H = \Phi_\Lambda \Phi_\Lambda ^* -  \frac{|\Lambda|}{M}I_M$, every even~$m\in \mathbb{N}$, and $\delta >0$,
\begin{equation*}
\begin{gathered}
\mathbb{P}\left\lbrace\frac{|\Lambda|}{M}(1-\delta) \le \sigma_{\min}^2(\Phi_\Lambda^*) \le \sigma_{\max}^2(\Phi_\Lambda^*)\le \frac{|\Lambda|}{M}(1+\delta)\right\rbrace \geq 1 - \frac{M^{m}}{|\Lambda|^{m}}\delta^{-m}\mathbb{E}(\Tr H^{m}),\\
\mathbb{E}\left(\Tr H^m \right) = \sum_{\substack{j_1, j_2, \dots, j_m\in \mathbb{Z}_M,\\j_1\ne j_2\ne\dots\ne j_m\ne j_{1}}}\sum_{(k_1, \ell_1)\in \Lambda}  \dots \sum_{(k_m, \ell_m)\in \Lambda} e^{\frac{2\pi i}{M} \sum_{t = 1}^m \ell_t (j_t - j_{t+1})} E_{\substack{j_1\dots j_m\\k_1\dots k_m}},
\end{gathered}
\end{equation*}
\noindent where $E_{\substack{j_1\dots j_m\\k_1\dots k_m}} =\frac{1}{M^{m}}$ if there exists a permutation $\alpha\in \Sigma_m$, such that, for  every $t\in \{1,\dots,m\}$, $j_t - k_t = j_{\alpha(t)} - k_{\alpha(t) - 1}$; and $E_{\substack{j_1\dots j_m\\k_1\dots k_m}} = 0$ otherwise. Here, $\Sigma_m$ denotes the group of permutations of $\{1,\dots, m\}$. That is, $\alpha\in \Sigma_m$ is a bijection $\alpha:\{1,\dots , m\} \to \{1,\dots , m\}$.

For $k\in \mathbb{Z}_M$, let us denote by $A_k$ the set $A_k = \{\ell\in \mathbb{Z}_M, \text{ s.t. }(k,\ell)\in \Lambda\}$. After rearranging the sum in the trace formula above, we have
\begin{equation*}
\begin{split}
\mathbb{E}\left(\Tr H^m \right) & = \sum_{\substack{j_1, j_2, \dots, j_m\in \mathbb{Z}_M,\\j_1\ne j_2\ne\dots\ne j_m\ne j_{1}}}\sum_{k_1, k_2, \dots, k_m\in \mathbb{Z}_M} E_{\substack{j_1\dots j_m\\k_1\dots k_m}} \sum_{\ell_1\in A_{k_1}}  \dots \sum_{\ell_m\in A_{k_m}} e^{\frac{2\pi i}{M} \sum_{t = 1}^m \ell_t (j_t - j_{t+1})}\\
& = \sum_{\substack{j_1, j_2, \dots, j_m\in \mathbb{Z}_M,\\j_1\ne j_2\ne\dots\ne j_m\ne j_{1}}}\sum_{k_1, k_2, \dots, k_m\in \mathbb{Z}_M} E_{\substack{j_1\dots j_m\\k_1\dots k_m}} \prod_{t = 1}^m \sum_{\ell_t\in A_{k_t}} e^{\frac{2\pi i}{M}\ell_t (j_t - j_{t+1})}
\end{split}
\end{equation*}
We note that, by the construction of $\Lambda$, each set $A_{k_t}$, $t\in \{1,\dots, m\}$, is a random subset of $\mathbb{Z}_M$, such that the events $\{\ell\in A_{k_t}\}$, $\ell\in \mathbb{Z}_M$, are independent and have probability $\tau$. Then Corollary \ref{cor_sum_rand_roots_of_unity} implies that, for every $t\in \{1,\dots, m\}$ and a constant $C' > 4\sqrt{2}$,
\begin{equation*}
\mathbb{P}\left\lbrace \max_{q\in \mathbb{Z}_M, q\ne 0} \left|\sum_{\ell\in A_{k_t}} e^{2\pi i \ell q/M}\right| < C'\log M \right\rbrace \ge 1 -  \frac{1}{M^{\frac{C'}{2\sqrt{2}} - 2}}.
\end{equation*}
\noindent In particular, $$\max_{\substack{j_t, j_{t+1}\in \mathbb{Z}_M, \\ j_t\ne j_{t+1}}}\left|\sum_{\ell_t\in A_{k_t}} e^{\frac{2\pi i}{M}\ell_t (j_t - j_{t+1})}\right| < C'\log M,$$ with probability at least $1 -  \dfrac{1}{M^{\frac{C'}{2\sqrt{2}} - 2}}$. By taking the union bound over all $t\in \{1,\dots, m\}$, we conclude that, with probability at least ${1-\dfrac{m}{M^{\frac{C'}{2\sqrt{2}}-2}}}$,
\begin{equation*}
\left|\prod_{t = 1}^m \sum_{\ell_t\in A_{k_t}} e^{\frac{2\pi i}{M}\ell_t (j_t - j_{t+1})}\right| < C'^m \log^m M.
\end{equation*}
Then, applying the triangular inequality to the trace formula, we obtain that, on an event $X$ of probability at least $1 -  \dfrac{m}{M^{\frac{C'}{2\sqrt{2}} - 2}}$,
\begin{equation*}
\begin{split}
\mathbb{E}\left(\Tr H^m \right) & \le \sum_{\substack{j_1, j_2, \dots, j_m\in \mathbb{Z}_M,\\j_1\ne j_2\ne\dots\ne j_m\ne j_{1}}}\sum_{k_1, k_2, \dots, k_m\in \mathbb{Z}_M} E_{\substack{j_1\dots j_m\\k_1\dots k_m}} \left|\prod_{t = 1}^m \sum_{\ell_t\in A_{k_t}} e^{\frac{2\pi i}{M}\ell_t (j_t - j_{t+1})}\right| \\
& < C'^m \log^m M \sum_{\substack{j_1, j_2, \dots, j_m\in \mathbb{Z}_M,\\j_1\ne j_2\ne\dots\ne j_m\ne j_{1}}}\sum_{k_1, k_2, \dots, k_m\in \mathbb{Z}_M} E_{\substack{j_1\dots j_m\\k_1\dots k_m}}
\end{split}
\end{equation*}

A permutation $\alpha\in \Sigma_m$ can be presented as a product
\begin{equation}\label{eq_cycle_decomp}
\alpha = (i_{11} i_{12}\dots i_{1 r_1})(i_{21} i_{22}\dots i_{2 r_2})\dots (i_{s1} i_{s2}\dots i_{s r_s})
\end{equation}
\noindent of disjoint cycles, where $r_1 + r_2 +\dots + r_s = m$, and, for each $p\in \{1,\dots , s\}$, $\alpha(i_{p q}) = i_{p (q+1)}$ for $q\in \{1,\dots, r_p - 1\}$ and $\alpha(i_{p r_p }) = i_{p 1}$.

Suppose that we have $k_1,\dots , k_m$ fixed. Then $E_{\substack{j_1\dots j_m\\k_1\dots k_m}}\ne 0$ if and only if there exists $\alpha\in \Sigma_m$, such that $j_t -  j_{\alpha(t)} = k_t - k_{\alpha(t) - 1}$, for all $t\in \{1,\dots,m\}$. 
Assuming that $\alpha$ has $s$ cycles in the disjoint cycle decomposition \eqref{eq_cycle_decomp}, this condition can be rewritten in the form of $s$ systems of linear equations for $j_1,\dots, j_m$. Namely, for each $p\in \{1,\dots , s\}$, we have 
\begin{align*}
j_{i_{p1}} -  j_{i_{p2}} & = k_{i_{p1}} - k_{i_{p2} - 1}\\
j_{i_{p2}} -  j_{i_{p3}} & = k_{i_{p2}} - k_{i_{p3} - 1}\\
& \cdots\\
j_{i_{pr_p}} -  j_{i_{p1}} & = k_{i_{pr_p}} - k_{i_{p1} - 1}.\numberthis\label{system_for_j}
\end{align*}  
Note that the system \eqref{system_for_j} has rank $r_p-1$. Furthermore, summing up all the equations, on the left hand side we obtain zero. So, \eqref{system_for_j} has $M$ different solutions~if
\begin{equation}\label{eq_restrict_k}
\sum_{q = 1}^{r_p}k_{i_{pq}} = \sum_{q = 1}^{r_p}k_{i_{pq} - 1},
\end{equation}
and does not have a solution otherwise. Moreover, if $s\ne 1$, that is, $r_p<m$, then the sets of indices $\{i_{pq}\}_{q = 1}^{r_p}$ on the left hand side of \eqref{eq_restrict_k} and $\{i_{pq} - 1\}_{q = 1}^{r_p}$ on the right hand side of \eqref{eq_restrict_k} are different. Indeed, suppose that $\{i_{pq}\}_{q = 1}^{r_p} = \{i_{pq} - 1\}_{q = 1}^{r_p}$, and let $i_{pq_0} = \min_{q \in \{1,\dots, r_p\}} i_{pq}$ be the smallest element in this set. Since $i_{pq_0} - 1$ is also an element of $\{i_{pq}\}_{q = 1}^{r_p}$, we have $i_{pq_0} - 1\ge i_{pq_0}$, which implies $i_{pq_0} = 1$ and $i_{pq_0} - 1= m$. Then, since $m\in \{i_{pq}\}_{q = 1}^{r_p}$, we also have  $m -1\in \{i_{pq}\}_{q = 1}^{r_p}$. Proceeding the argument by induction, we obtain $\{i_{pq}\}_{q = 1}^{r_p} = \{1, \dots, m\}$, which is a contradiction. Without loss of generality, we can assume that $i_{pr_p}\notin \{i_{pq} - 1\}_{q = 1}^{r_p}$, for every $p\in \{1,\dots,s\}$. 

It follows that, for each cycle in the cycle decomposition~\eqref{eq_cycle_decomp}, except the last one, equation~\eqref{eq_restrict_k} is a nontrivial linear relation for $k_t$, $t \in \{1,\dots, m\}$. For the last cycle the relation follows automatically, assuming \eqref{eq_restrict_k} is satisfied for each $p\in \{1,\dots, s-1\}$. So, for the system of linear equations for $j_1,\dots, j_m$ to have a solution, $k_{i_{pr_p}}$, $p\in \{1,\dots , s-1\}$, should be determined by $\{k_1,\dots, k_m\}\setminus \{k_{i_{pr_p}}\}_{p = 1}^{ s-1}$ using equations \eqref{eq_restrict_k}. It this case the number of different solutions is~$M^s$.

Then, for the expectation of the trace of $H^m$, on the event $X$ we have
\begin{equation*}
\begin{split}
\mathbb{E}\left(\Tr H^m \right) & < C'^m \log^m M \sum_{\substack{j_1, j_2, \dots, j_m\in \mathbb{Z}_M,\\j_1\ne j_2\ne\dots\ne j_m\ne j_{1}}}\sum_{k_1, k_2, \dots, k_m\in \mathbb{Z}_M} E_{\substack{j_1\dots j_m\\k_1\dots k_m}}\\
& \le C'^m \log^m M \sum_{s = 1}^m S(m,s)  \sum_{j_{i_{11}},\dots, j_{i_{s1}}\in \mathbb{Z}_M} \sum_{\substack{k_{i_{11}},\dots, k_{i_{1(r_1-1)}}\in \mathbb{Z}_M \\ ^{\vdots} \\ k_{i_{(s-1)1}},\dots, k_{i_{(s-1)(r_s-1)}}\in \mathbb{Z}_M \\ k_{i_{s1}},\dots, k_{i_{sr_s}}\in \mathbb{Z}_M}} \frac{1}{M^m}\\
& = C'^m\frac{\log^m M}{M^m} \sum_{s = 1}^m S(m,s) M^s M^{m-s+1} \\
& = C'^m M\log^m M \sum_{s = 1}^m S(m,s) = C'^m m! M\log^m M,
\end{split}
\end{equation*}
\noindent where $S(m, s)$ denotes the Stirling number of the first kind, equal to the number of permutations in $\Sigma_m$ with exactly $s$ cycles in the disjoint cycle decomposition.

Moreover, the cardinality of $\Lambda$ is given by a sum of $M^2$ independent Bernoulli random variables with success probability $\tau = \frac{C\log M}{M^{\frac{m-1}{m}}}$. More precisely, $$|\Lambda| = \sum_{(k,\ell)\in \mathbb{Z}_M\times \mathbb{Z}_M}{\bf 1}_{\Lambda}(k,\ell).$$ Then Hoeffding's inequality (Lemma \ref{Hoeffging_ineq_Bernoulli}) applied with $t = \frac{C\log M}{2M^{\frac{m-1}{m}}}$ implies
\begin{equation*}
\mathbb{P}\left\lbrace |\Lambda|\le \frac{1}{2}CM^{1+ \frac{1}{m}}\log M \right\rbrace\le e^{-2C^2M^{\frac{2}{m}}\log^2 M}.
\end{equation*}
\noindent That is, $|\Lambda| > \frac{1}{2}CM^{1+ \frac{1}{m}}\log M$ on an event $Y$ of probability at least $1 - e^{-2C^2M^{\frac{2}{m}}\log^2 M}$.

Then, on the event $X\cap Y$, which has probability at least $1 - \frac{\tilde{C}m}{M^{\frac{C'}{2\sqrt{2}} - 2}}$, for some $\tilde{C}>0$, the obtained estimates for the trace expectation and frame set cardinality lead to the following probability bound for the singular values estimates.
\begin{equation*}
\begin{split}
\mathbb{P}\left\lbrace\frac{|\Lambda|}{M}(1-\delta) \le \sigma_{\min}^2(\Phi_\Lambda^*) \le \sigma_{\max}^2(\Phi_\Lambda^*)\le \frac{|\Lambda|}{M}(1+\delta)\right\rbrace \geq 1 - \frac{M^{m}}{|\Lambda|^{m}}\delta^{-m}\mathbb{E}(\Tr H^{m})\\
\geq 1 - C'^m m! \delta^{-m} \frac{M^{m}}{\frac{1}{2^m}C^m M^{m+1}\log^m M}M\log^m M = 1 - \left(\frac{2C'}{C}\right)^m m! \delta^{-m}.
\end{split}
\end{equation*}
This concludes the proof of Theorem \ref{th_sing_val_rand_lambda} provided $C$ is chosen to be large enough.

\section{Numerical results and further discussion}\label{sec_num_res_sing_val}

In this section we further investigate singular values of Gabor frames with a random window using numerical simulations. In particular, we aim to numerically analyse the bounds on the extreme singular values of the analysis matrix $\Phi_\Lambda^*$ of a Gabor frame $(g,\Lambda)$ with a Steinhaus window $g$ in the case when $\Lambda$ is a random subset of $\mathbb{Z}_M\times\mathbb{Z}_M$.

Let us fix an even $m\in\mathbb{N}$, and let $C>0$ be a sufficiently large constant depending on $m$. Consider a random $\Lambda\subset \mathbb{Z}_M\times\mathbb{Z}_M$, such that the events $\{(k,\ell)\in \Lambda\}$ are independent for all $(k,\ell)\in \mathbb{Z}_M\times \mathbb{Z}_M$ and have probability ${\tau = \frac{C\log M}{M^{\frac{m-1}{m}}}}$. Theorem~\ref{th_sing_val_rand_lambda} ensures that, with high probability (with respect to the choice of $\Lambda$),
\begin{equation*}
\mathbb{P}\left\lbrace\frac{|\Lambda|}{M}(1-\delta) \le \sigma_{\min}^2(\Phi_\Lambda^*) \le \sigma_{\max}^2(\Phi_\Lambda^*)\le \frac{|\Lambda|}{M}(1+\delta)\right\rbrace\ge 1 - \varepsilon,
\end{equation*}
\noindent where $\varepsilon\in (0,1)$ depends on $m$, $\delta$, and the choice of $C$. To illustrate Theorem \ref{th_sing_val_rand_lambda}, we use two sets of numerical simulations. 

\begin{figure}[t]\center
\begin{tabular}{cc}
\includegraphics[width=74mm]{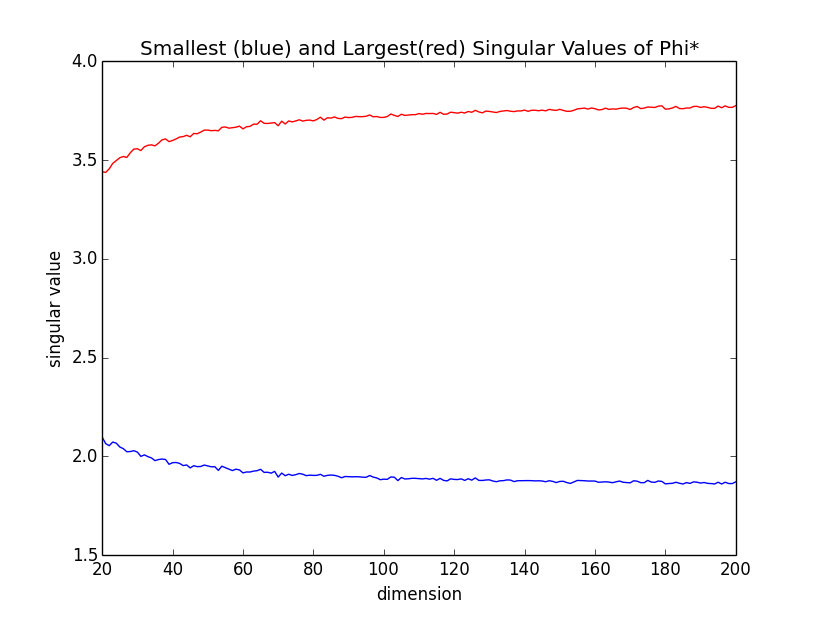}
&
\includegraphics[width=74mm]{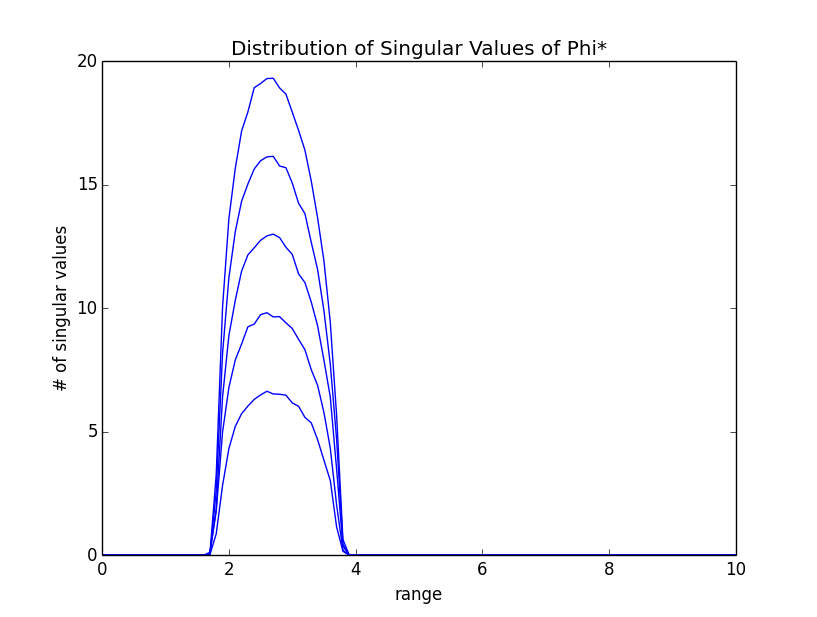}
\end{tabular}
\caption{\label{num_sing_val_distr} The left hand side of the figure shows the dependence of the extreme singular values of the analysis matrix $\Phi_{\Lambda}^*$ of a Gabor frame $(g, \Lambda)$ on the ambient dimension $M$; and the right hand side of the figure shows the distribution of the singular values of $\Phi_{\Lambda}^*$ for the dimensions $M = 100, 150, 200, 250, 300$. Here, $g$ is a Steinhaus window and $\Lambda$ is chosen at random as described in Theorem \ref{th_sing_val_rand_lambda}, with $\tau = \frac{C}{M}$, that is, $|\Lambda| = O(M)$ with high probability. The number of the numerical experiments considered here is 1000. These numerical results suggest that, with high probability, the singular values of $\Phi_{\Lambda}^*$ lie inside an interval $\left[k\frac{|\Lambda|}{M}, K\frac{|\Lambda|}{M}\right]$, for some constants $0<k<K$ that do not depend on $M$. This allows us to conjecture that a version of Theorem~\ref{th_sing_val_rand_lambda} is true also for~$\Lambda$ with $|\Lambda| = O(M)$. In other words, the additional factor of $M^\epsilon \log M$ in the cardinality of $\Lambda$ is a side effect of the method used to prove the theorem.}
\end{figure}

In the first set of numerical simulations, we investigate the behavior of the singular values of the analysis matrix $\Phi_{\Lambda}^*$ of a Gabor frame $(g, \Lambda)$ with a Steinhaus window $g$ and set $\Lambda\subset \mathbb{Z}_M\times\mathbb{Z}_M$ selected at random, so that $|\Lambda| = O(M)$ with high probability. The obtained numerical results suggest that, in the case when random $\Lambda$ is constructed as described in Theorem \ref{th_sing_val_rand_lambda} with $\tau = \frac{C}{M}$, there exist constants $0<k<K$ not depending on the ambient dimension $M$, such that all the singular values of the analysis matrix $\Phi_{\Lambda}^*$ are inside the interval $\left[k\frac{|\Lambda|}{M}, K\frac{|\Lambda|}{M}\right]$ with high probability, see Figure \ref{num_sing_val_distr} (left). The right hand side of Figure \ref{num_sing_val_distr} shows the distribution of the singular values of $\Phi_{\Lambda}^*$ over this interval for the selected dimensions $M = 100, 150, 200, 250, 300$. 
\medskip

\begin{figure}[t]\center
\begin{tabular}{cc}
\includegraphics[width=76mm]{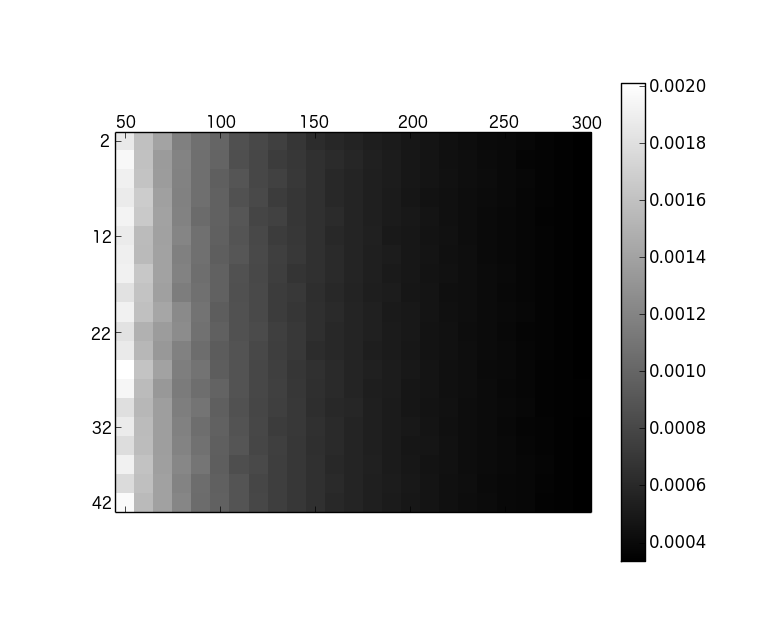}
&
\includegraphics[width=76mm]{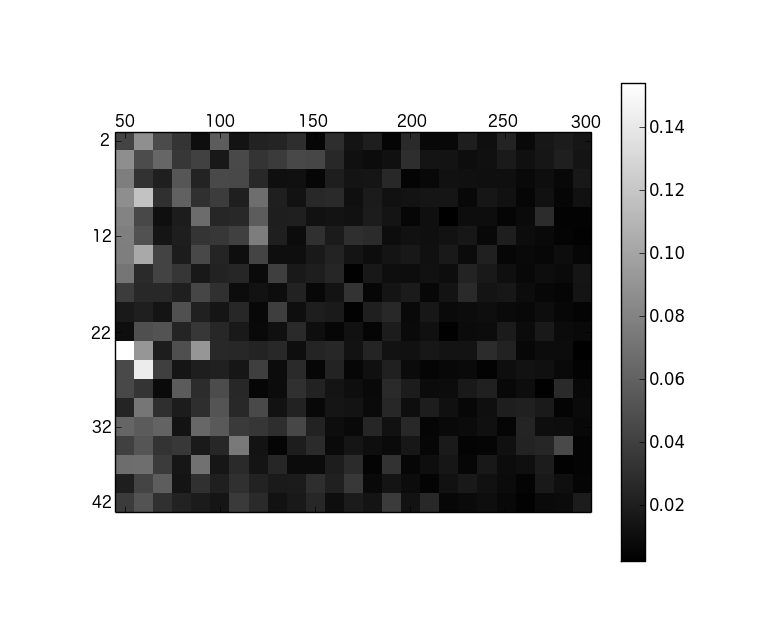}
\end{tabular}
\caption{\label{num_sing_val_trace_est} The figure illustrates the behavior of the numerically estimated normalized trace expectation $\frac{M^{m}}{|\Lambda|^{m}}\mathbb{E}\left(\Tr \left(\Phi_\Lambda \Phi_\Lambda ^*- \frac{|\Lambda|}{M}I_M \right)^{m}\right)$, where $\Phi_\Lambda$ is the synthesis matrix of a Gabor frame $(g, \Lambda)$ with a Steinhaus window $g$. The left hand side of the figure illustrates the numerical results in the case when $\Lambda$ is chosen at random, as described in Theorem~\ref{th_sing_val_rand_lambda}, with $\tau = \frac{C}{M}$; and the right hand side of the figure illustrates the case when ${\Lambda = F\times \{0, 1, \dots, \lfloor\frac{M}{2}\rfloor\}}$. The plots show the dependence of the normalized trace expectation on the ambient dimension $M$ (horizontal axis) and the parameter $C$ (vertical axis), for a fixed $m$. These numerical results allow us to conjecture that the probability bound obtained in Theorem~\ref{th_sing_val_rand_lambda} can be further improved.
}
\end{figure}

We use the second set of simulations to investigate the behavior of the trace of the matrix $H = \Phi_\Lambda \Phi_\Lambda ^*- \frac{|\Lambda|}{M}I_M$, where $\Phi_\Lambda$ is the synthesis matrix of a Gabor frame $(g, \Lambda)$ with a Steinhaus window $g$. It follows from Lemma \ref{lemma_trace_formula} that $${\mathbb{P}\left\lbrace \sigma_{\min}^2(\Phi_\Lambda^*) \le \frac{|\Lambda|}{M}(1-\delta)  \text{ or } \sigma_{\max}^2(\Phi_\Lambda^*)\ge \frac{|\Lambda|}{M}(1+\delta)\right\rbrace \leq \frac{M^{2m}}{|\Lambda|^{2m}}\delta^{-2m}\mathbb{E}(\Tr H^{2m})}.$$ In other words, the normalized trace expectation $\frac{M^{m}}{|\Lambda|^{m}}\mathbb{E}\left(\Tr H^{m}\right)$ is used to estimate the probability of the ``failure'' event when either the minimal singular value of the analysis matrix $\Phi_\Lambda^*$ is too small or the maximal singular value is too large, so that $(g, \Lambda)$ is not well-conditioned.

For the normalized trace expectation, we consider two different constructions of~$\Lambda$, providing the average and the worst case estimates, respectively. The left hand side of Figure~\ref{num_sing_val_trace_est} shows the numerical results in the case when $\Lambda$ is chosen at random as described in Theorem \ref{th_sing_val_rand_lambda} with $\tau = \frac{C}{M}$. The right hand side of Figure~\ref{num_sing_val_trace_est} illustrates the case when $\Lambda$ is of the form $\Lambda = F\times \{0, 1, \dots, \lfloor\frac{M}{2}\rfloor\}$, $F\subset\mathbb{Z}_M$. The plots show the dependence of the normalized trace expectation on the ambient dimension $M$ and the parameter $C$ in the definition of $\tau$, for a fixed~$m$. The obtained numerical results suggest that, in both cases, the normalized trace expectation decreases rapidly with the dimension. This allows us to conjecture that the probability bound obtained in Theorem~\ref{th_sing_val_rand_lambda} can be further improved. Moreover, Figure \ref{num_sing_val_trace_est} (left) shows that, in the case of randomly selected $\Lambda$, the normalized trace expectation does not seem to depend on the parameter $C$.

These numerical findings, illustrated on Figures \ref{num_sing_val_distr} and \ref{num_sing_val_trace_est}, motivate the following conjecture.

\begin{conjecture}\label{conj_sing_val_Gabor_small_lambda}
Let $g$ be a Steinhaus window, that is, $g(j) = \frac{1}{\sqrt{M}}e^{2\pi i y_j}$, ${j\in \mathbb{Z}_M}$, with $y_j$ independent uniformly distributed on~$[0,1)$. Consider a Gabor system $(g, \Lambda)$ with a random set $\Lambda\subset \mathbb{Z}_M\times \mathbb{Z}_M$ constructed so that events $\{(k,\ell)\in \Lambda\}$ are independent for all $(k,\ell)\in \mathbb{Z}_M\times \mathbb{Z}_M$ and have probability $\tau = \frac{C}{M}$, where $C>0$ is a sufficiently large numerical constant. Then, with high probability (with respect to the choice of $\Lambda$),
\begin{equation*}
\mathbb{P}\left\lbrace\frac{|\Lambda|}{M}(1-\delta) \le \sigma_{\min}^2(\Phi_\Lambda^*) \le \sigma_{\max}^2(\Phi_\Lambda^*)\le \frac{|\Lambda|}{M}(1+\delta)\right\rbrace\ge 1 - \frac{c}{M},
\end{equation*}
\noindent where the constant $c>0$ depends only on $\delta$.
\end{conjecture}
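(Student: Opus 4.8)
The plan is to run the trace moment method of Lemma~\ref{lemma_trace_formula}, but, rather than conditioning on a good event for $\Lambda$ and bounding the frequency character sums deterministically as in the proof of Theorem~\ref{th_sing_val_rand_lambda}, I would take the full expectation over both the window $g$ and the random set $\Lambda$, and exploit the exact cancellation produced by averaging over the modulation indices. Writing $\sum_{(k,\ell)\in\Lambda}=\sum_{k,\ell}\mathbf{1}_\Lambda(k,\ell)$ and using independence of the events $\{(k,\ell)\in\Lambda\}$, one has $\mathbb{E}_\Lambda\big[\prod_{t=1}^n\mathbf{1}_\Lambda(k_t,\ell_t)\big]=\tau^{\,d}$, where $d$ is the number of distinct pairs among $(k_1,\ell_1),\dots,(k_n,\ell_n)$. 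Combined with the window formula of Lemma~\ref{lemma_trace_formula}, this gives, for even $n$ (and the cyclic convention $j_{n+1}=j_1$),
\begin{equation*}
\mathbb{E}_{g,\Lambda}(\Tr H^n)=\sum_{\substack{j_1\ne\cdots\ne j_n\ne j_1}}\ \sum_{k_1,\dots,k_n\in\mathbb{Z}_M}\ \sum_{\ell_1,\dots,\ell_n\in\mathbb{Z}_M}\tau^{\,d}\,e^{\frac{2\pi i}{M}\sum_{t}\ell_t(j_t-j_{t+1})}\,E_{\substack{j_1\dots j_n\\k_1\dots k_n}} .
\end{equation*}

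Next I would carry out the frequency sum. Since a pair $(k_t,\ell_t)$ can coincide with $(k_{t'},\ell_{t'})$ only when $k_t=k_{t'}$, the weight $\tau^{d}$ couples only indices sharing a common time shift. Grouping the indices $t$ by the value of $k_t$ and expanding $\tau^{d}$ over the partition of each group according to coincidences of the $\ell_t$, every sub-block of equal modulation indices contributes a factor $\sum_{\ell}e^{\frac{2\pi i}{M}\ell\,(\sum_{t\in\text{block}}q_t)}=M\,\mathbf{1}\{\sum_{t\in\text{block}}q_t\equiv 0\}$, where $q_t=j_t-j_{t+1}\ne0$. The crucial point is that the fully distinct configuration ($d=n$) is annihilated, because an isolated increment $q_t\ne0$ forces its geometric sum to vanish; only configurations whose increments $q_t$ can be grouped into zero-sum blocks survive. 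This is precisely the pairing mechanism responsible for semicircle-type (Catalan) moments, and it supplies the small powers of $\tau$ needed for the estimate.

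I would then combine this frequency-cancellation structure with the permutation constraint $E_{\cdots}\ne0$ coming from the window expectation (the bijection $\alpha\in\Sigma_n$ with $j_t-k_t=j_{\alpha(t)}-k_{\alpha(t)-1}$, analysed through its cycle decomposition exactly as in the proof of Theorem~\ref{th_sing_val_rand_lambda}) and count degrees of freedom, the goal being a Wigner-type bound
\begin{equation*}
\mathbb{E}_{g,\Lambda}(\Tr H^n)\ \le\ K^{\,n}\,\tfrac{1}{\,n/2+1\,}\binom{n}{n/2}\,C^{\,n/2}\,M
\end{equation*}
for an absolute constant $K$, valid uniformly for $n$ up to order $\log M$; here $M\tau=C$ plays the role of $M\sigma^2$ for a matrix with entry variance $\sigma^2=\tau$ (consistent with the exact computation $\mathbb{E}_{g,\Lambda}|H(j_1,j_2)|^2=\tau(1-\tau)$). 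Granting this, I would pass from the deterministic-$\Lambda$ statement of Lemma~\ref{lemma_trace_formula} to random $\Lambda$ by conditioning on $Y=\{|\Lambda|\ge CM/2\}$, whose complement has probability at most $e^{-CM/8}$ by a multiplicative Chernoff bound for the Bernoulli sum $|\Lambda|$. On $Y$ one has $M^n/|\Lambda|^n\le(2/C)^n$, so
\begin{equation*}
\mathbb{P}\{\text{ill-conditioned}\}\ \le\ (2/C)^n\,\delta^{-n}\,\mathbb{E}_{g,\Lambda}(\Tr H^n)+e^{-CM/8}\ \le\ M\,\Big(\tfrac{4K}{\delta\sqrt{C}}\Big)^{n}+e^{-CM/8}.
\end{equation*}
Choosing $C$ large enough that $4K/(\delta\sqrt{C})<1$ and then optimizing the moment order at $n\asymp\log M$ drives the first term below $c/M$ with $c=c(\delta)$, which would yield the conjecture.

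The main obstacle is the uniform-in-$n$ combinatorial estimate in the displayed Wigner-type bound. Unlike a genuine Wigner matrix, $H$ has strongly dependent entries (all built from a single window and a single random set), so I must show that the joint effect of (i) the window permutation $\alpha$, (ii) the coincidence pattern of the time shifts $k_t$, and (iii) the zero-sum blocking of the increments $q_t$ never produces more surviving degrees of freedom than the pairings do; that is, that the number of free $j$-, $k$-, and $\ell$-variables, weighted by $\tau^{\,d}/M^{n}$ and by the factors of $M$ from the surviving geometric sums, is bounded by $C^{\,n/2}$ times a Catalan-type count times $M$. Keeping the $n$-dependence at the level of $K^{n}$ rather than, say, $n!$ is exactly what makes $n\asymp\log M$ admissible, and the time–frequency coupling between constraints (ii) and (iii) is where the delicate bookkeeping lies.
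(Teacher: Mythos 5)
First, note that the statement you are proving is stated in the paper only as Conjecture~\ref{conj_sing_val_Gabor_small_lambda}: the paper offers no proof, only numerical evidence (Figures~\ref{num_sing_val_distr} and~\ref{num_sing_val_trace_est}) and the explicit remark that the extra factor $M^{1/m}\log M$ in Theorem~\ref{th_sing_val_rand_lambda} is believed to be an artifact of the method. So there is no ``paper proof'' to compare against; your proposal has to stand on its own. Your overall strategy is sensible and correctly diagnoses why the paper's own argument cannot reach $\tau = C/M$: the proof of Theorem~\ref{th_sing_val_rand_lambda} bounds the $\ell$-sums pointwise by $C'\log M$ via Fourier bias and then counts configurations through $\sum_s S(m,s) = m!$, and the resulting $C'^m m!\log^m M$ forces $m$ to stay bounded. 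Averaging over $\Lambda$ as well as $g$, using $\mathbb{E}_\Lambda[\prod_t\mathbf{1}_\Lambda(k_t,\ell_t)] = \tau^d$, and exploiting the exact vanishing of $\sum_\ell e^{2\pi i\ell q/M}$ for $q\ne 0$ is the right way to kill both the $\log^m M$ and (one hopes) the $m!$. Your preliminary computations check out: $\mathbb{E}_{g,\Lambda}(\Tr H^2) = M(M-1)\tau(1-\tau)$, the entry variance is $\tau(1-\tau)$, and the reduction $M^n/|\Lambda|^n\le(2/C)^n$ on the Chernoff event $Y$ is legitimate because $\Tr H^n\ge 0$ for even $n$.

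However, there is a genuine gap, and you have in fact named it yourself: the displayed Wigner-type bound $\mathbb{E}_{g,\Lambda}(\Tr H^n)\le K^n\,\mathrm{Cat}(n/2)\,C^{n/2}M$, uniformly for $n$ up to order $\log M$, is asserted as a goal rather than proved, and it is the entire content of the conjecture. Everything else in your argument (Markov, Chernoff, optimization at $n\asymp\log M$) is routine once that bound is available. The difficulty is not cosmetic: the expansion of $\tau^d$ over coincidence patterns of the pairs $(k_t,\ell_t)$ is an inclusion--exclusion with signed terms; the zero-sum blocking of the increments $q_t=j_t-j_{t+1}$ can occur in blocks of size $\ge 3$ (not just pairings), each trading a factor of $M$ from the geometric sum against a factor of $\tau$ from a coincidence; and all of this is coupled to the window constraint, which ranges over as many as $n!$ bijections $\alpha$ whose cycle structure determines how many $j$- and $k$-variables are free. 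Showing that the net count never exceeds $K^n$ times a Catalan number --- i.e.\ that the permutation sum does not reintroduce the $m!$ that the paper's proof produces via the Stirling numbers --- is a substantial combinatorial argument (a F\"uredi--Koml\'os-type analysis for a matrix whose entries all derive from only $2M^2$ independent random bits), and without it the proposal is a program, not a proof.
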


In other words, the additional factor of $M^{\frac{1}{m}} \log M$ in the cardinality of $\Lambda$ is a side effect of the method used to prove Theorem \ref{th_sing_val_rand_lambda}. Analogous conjectures can be formulated also for other distributions of the Gabor window.

\subsection{Erasure-robust frames}

In some areas of signal processing related to communication systems or phase retrieval, the available frame coefficients of an (unknown) signal of interest are not only corrupted by additive noise, but some of them might be missing or be too unreliable to be used for reconstruction. In this case, the measurement frame should allow for robust signal reconstruction from incomplete set of noisy frame coefficients. Fickus and Mixon introduced the notion of numerically erasure-robust frames that formalizes this property \cite{fickus2012numerically}.

\begin{definition}
For a fixed $p\in [0,1]$ and $C \geq 1$, a frame $\Phi = \{\varphi_j\}_{j = 1}^N$ is called a \emph{$(p,C)$-numerically erasure-robust frame} if, for every $J \subset \{1,\dots,N\}$ of cardinality $|J| = (1 - p)N$, the condition number of the analysis matrix of the corresponding subframe $\Phi_J = \{\varphi_j\}_{j \in J}$ satisfies $\Cond(\Phi_J^*) \leq C$.
\end{definition}

The following result shows that a Gaussian frame with independent frame vectors is a numerically erasure-robust frame \cite{fickus2012numerically}.

\begin{theorem}
Fix $\varepsilon >0$ and consider a frame $\Phi = \{\varphi_j\}_{j = 1}^N\subset \mathbb{C}^M$ such that \linebreak$\varphi_j(m) \sim~\text{i.i.d. } \mathcal{C}\mathcal{N}(0,1)$, for $j\in \{1,\dots,N\}$ and $m\in \mathbb{Z}_M$. Then $\Phi$ is a $(p,C)$-numerically erasure-robust frame with overwhelming probability provided $p$ and $C$ satisfy
\begin{equation*}
 \sqrt{\frac{M}{N}}\leq \frac{C-1}{C+1}\sqrt{1 - p} - \sqrt{\varepsilon + 2p(1 - \log(p))}.
\end{equation*}
\end{theorem}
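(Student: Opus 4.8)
The plan is to control the condition number $\Cond(\Phi_J^*)=\sigma_{\max}(\Phi_J^*)/\sigma_{\min}(\Phi_J^*)$ \emph{simultaneously} for every index set $J\subset\{1,\dots,N\}$ with $|J|=(1-p)N$. The definition of a $(p,C)$-numerically erasure-robust frame requires the bound for all such $J$, and since deleting rows can only decrease $\sigma_{\min}$, the genuine difficulty is a \emph{uniform} lower bound on $\sigma_{\min}(\Phi_J^*)$ (with a matching upper bound on $\sigma_{\max}(\Phi_J^*)$) over the $\binom{N}{pN}$ choices of $J$. The key structural fact is that for each fixed $J$ the matrix $\Phi_J^*\in\mathbb{C}^{|J|\times M}$ has independent identically distributed $\mathcal{CN}(0,1)$ entries, so its extreme singular values concentrate sharply around $\sqrt{|J|}\pm\sqrt M=\sqrt{(1-p)N}\pm\sqrt M$.

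First I would record the single-$J$ estimate. By the sharp Gaussian singular-value bounds (Gordon's Slepian-type comparison inequality, equivalently the Bai--Yin law) one has $\mathbb{E}\,\sigma_{\max}(\Phi_J^*)\le\sqrt{(1-p)N}+\sqrt M$ and $\mathbb{E}\,\sigma_{\min}(\Phi_J^*)\ge\sqrt{(1-p)N}-\sqrt M$; and since $A\mapsto\sigma_{\max}(A)$ and $A\mapsto\sigma_{\min}(A)$ are $1$-Lipschitz in the Frobenius norm, Gaussian concentration of measure yields, for every $s>0$ and an absolute constant $c\ge \tfrac12$,
\[
\mathbb{P}\!\left\{\sigma_{\max}(\Phi_J^*)\ge \sqrt{(1-p)N}+\sqrt M+s\right\}\le e^{-c s^2},\qquad
\mathbb{P}\!\left\{\sigma_{\min}(\Phi_J^*)\le \sqrt{(1-p)N}-\sqrt M-s\right\}\le e^{-c s^2}.
\]
As an alternative consistent with the appendix, one may replace Gordon's bound and concentration by a covering of $\mathbb{S}^{M-1}$ together with the $\chi^2$ tail estimate of Lemma~\ref{chi_square} applied to $\|\Phi_J^*x\|_2^2=\sum_{j\in J}|\langle\varphi_j,x\rangle|^2$, exactly the device used in Example~\ref{example_structured}, at the cost of a slightly lossier aspect-ratio term.

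Next I would take the union bound over all admissible $J$. Their number is $\binom{N}{(1-p)N}=\binom{N}{pN}\le e^{N H(p)}$, where $H(p)=-p\log p-(1-p)\log(1-p)\le p(1-\log p)$ is the binary entropy. Choosing the deviation $s=\sqrt{N\bigl(\varepsilon+2p(1-\log p)\bigr)}$ makes $c s^2\ge NH(p)+c\varepsilon N$ (this is where $c\ge\tfrac12$ together with $H(p)\le p(1-\log p)$ fixes the factor $2$), so the total failure probability is at most $2\binom{N}{pN}e^{-cs^2}\le 2e^{-c\varepsilon N}$, which is overwhelmingly small. Hence, with overwhelming probability, the two-sided bound
\[
\sqrt{(1-p)N}-\sqrt M-s\;\le\;\sigma_{\min}(\Phi_J^*)\;\le\;\sigma_{\max}(\Phi_J^*)\;\le\;\sqrt{(1-p)N}+\sqrt M+s
\]
holds simultaneously for every $J$ with $|J|=(1-p)N$.

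Finally I would convert this into the condition-number bound. Writing $D=\sqrt M+s$, the estimate $\Cond(\Phi_J^*)\le C$ follows from $\sqrt{(1-p)N}+D\le C\bigl(\sqrt{(1-p)N}-D\bigr)$, that is from $D\le\frac{C-1}{C+1}\sqrt{(1-p)N}$; dividing through by $\sqrt N$ and recalling $s/\sqrt N=\sqrt{\varepsilon+2p(1-\log p)}$ turns this into precisely $\sqrt{M/N}\le\frac{C-1}{C+1}\sqrt{1-p}-\sqrt{\varepsilon+2p(1-\log p)}$, the hypothesis. I expect the main obstacle to be the union-bound step: it is essentially tight, so the sub-Gaussian tail $e^{-cs^2}$ must genuinely beat the entropy growth $e^{NH(p)}$ of the number of subsets, which is exactly what forces the correction term $\sqrt{\varepsilon+2p(1-\log p)}$; moreover the sharp $\sqrt M$ aspect-ratio term (rather than a lossy $\sqrt{M\log M}$) has to be produced by a Gordon-type comparison rather than a naive net, and carefully tracking the constant $c$ is what pins down both the factor $2$ and the role of the slack $\varepsilon$.
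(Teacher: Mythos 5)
The paper does not actually prove this statement; it is quoted verbatim from Fickus and Mixon \cite{fickus2012numerically}, so there is no internal proof to compare against. Your argument is correct and is essentially the standard one (and the one in the cited source): Davidson--Szarek/Gordon-type concentration of the extreme singular values of each Gaussian submatrix $\Phi_J^*$ around $\sqrt{(1-p)N}\pm\sqrt M$, a union bound over the $\binom{N}{pN}\le e^{Np(1-\log p)}$ admissible erasure patterns, and the choice $s=\sqrt{N(\varepsilon+2p(1-\log p))}$ that makes the sub-Gaussian tail beat the entropy with $c\ge\tfrac12$ to spare; the final algebra reproducing $\sqrt{M/N}\le\frac{C-1}{C+1}\sqrt{1-p}-\sqrt{\varepsilon+2p(1-\log p)}$ checks out. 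The only point worth flagging is the real-versus-complex normalization of $\mathcal{CN}(0,1)$ in the concentration step, but this only improves the constant $c$ and so does not affect the conclusion.
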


\medskip
In this section, we aim to numerically investigate robustness to erasures of Gabor frames~$(g, \Lambda)$ with a random window $g$.

We note that, since a full Gabor frame is tight, for any $\Lambda\subset \mathbb{Z}_M\times\mathbb{Z}_M$ and $g\in \mathbb{S}^{M-1}$,
\begin{equation*}
\sigma_{\max}^2(\Phi_\Lambda^*) = \max_{x\in \mathbb{S}^{M-1}}\sum_{\lambda\in \Lambda}|\langle x, \pi(\lambda)g\rangle|^2\le \max_{x\in \mathbb{S}^{M-1}}\sum_{\lambda\in \mathbb{Z}_M\times\mathbb{Z}_M}|\langle x, \pi(\lambda)g\rangle|^2 = M.
\end{equation*}
Thus we concentrate on the uniform bound on the minimal singular value $\sigma_{\min}^2(\Phi_{\Lambda'}^*) $, for all subframes $(g, \Lambda')$ of $(g, \Lambda)$ with $|\Lambda'| \ge (1 - p)|\Lambda|$, where $p$ is some fixed parameter.

\begin{figure}[t]\center
\includegraphics[width=90mm]{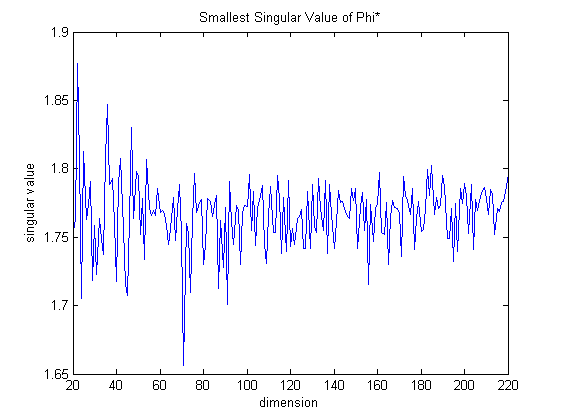}
\caption{\label{num_singular} The figure shows the dependence of the numerically estimated parameter ${\Delta\left(\frac{1}{3}\right)=\min\left\lbrace\sigma_{\min}^2(\Phi_{\Lambda'}^*):~ \Lambda'\subset \Lambda,~ |\Lambda'|\ge \frac{2}{3}|\Lambda|\right\rbrace}$ on the ambient dimension $M$. The Gabor window $g$ here is random, uniformly distributed on the unit sphere $\mathbb{S}^{M-1}$, and ${\Lambda = F\times\mathbb{Z}_M}$, where $|F|$ is a constant that does not depend on $M$. The number of the numerical experiments considered here is 1000, and the plot shows the smallest obtained result for each dimension. These numerical results suggest that $\Delta\left(\frac{1}{3}\right)$ is bounded away from zero by a numerical constant not depending on the dimension~$M$, that is, the Gabor frame $(g, F\times \mathbb{Z}_M)$ is robust to erasures.}
\end{figure}

For a Gabor frame $(g, \Lambda)$ with a window $g$ uniformly distributed on the unit sphere~$\mathbb{S}^{M-1}$, let us consider the parameter $\Delta(p)$, $p\in [0,1]$, given by
\begin{equation*}
\Delta(p) = \min_{\substack{\Lambda'\subset \Lambda,\\ |\Lambda'|\ge (1 - p)|\Lambda|}}\sigma_{\min}^2(\Phi_{\Lambda'}^*).
\end{equation*}
Numerical results illustrating the dependence of the value $\Delta(1/3)$ on the dimension $M$ are presented on Figure \ref{num_singular}. They suggest that $\Delta(p)$ is bounded away from zero by a numerical constant not depending on $M$. More precisely, we formulate the following conjecture.

\begin{conjecture}\label{conj_sing_val_gabor_unif}
Consider a Gabor frame $(g, \Lambda)$ with $g$ uniformly distributed on~$\mathbb{S}^{M-1}$ and $\Lambda\subset \mathbb{Z}_M\times\mathbb{Z}_M$, such that $|\Lambda| = O(M\log^\alpha M)$ (where the parameter $\alpha\geq 0$ has to be specified). Then, for $p\in (0,1)$, $\Delta(p)\geq C$ with high probability, where $C>0$ depends only on $p$.
\end{conjecture}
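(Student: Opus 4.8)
The plan is to follow the Fickus--Mixon strategy for Gaussian frames, adapted to the single-window Gabor setting. Since the full Gabor frame is tight, the uniform upper bound $\sigma_{\max}^2(\Phi_{\Lambda'}^*)\le M$ already holds for every subframe, so it suffices to bound $\Delta(p)$ from below. The first step is to exchange the two optimizations defining $\Delta(p)$: for a \emph{fixed} direction $x\in\mathbb{S}^{M-1}$, the worst admissible erasure pattern simply discards the $\lfloor p|\Lambda|\rfloor$ largest of the numbers $|\langle x,\pi(\lambda)g\rangle|^2$, so that
\[
\Delta(p)=\min_{x\in\mathbb{S}^{M-1}}\left(\sum_{\lambda\in\Lambda}|\langle x,\pi(\lambda)g\rangle|^2-\max_{\substack{S\subset\Lambda\\ |S|=\lfloor p|\Lambda|\rfloor}}\sum_{\lambda\in S}|\langle x,\pi(\lambda)g\rangle|^2\right).
\]
Thus $\Delta(p)\ge C$ is equivalent to a uniform \emph{spreading} condition: for every $x$, removing a $p$-fraction of the time-frequency coefficients of $x$ still leaves energy at least $C$. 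The crucial feature is that the minimum over $x$ and the maximum over the erased set $S$ are \emph{coupled}; decoupling them (bounding $\min_x\sum_{\Lambda}$ and $\max_x\max_S\sum_S$ separately) is far too lossy, as one sees already for $\Lambda=F\times\mathbb{Z}_M$ via Proposition~\ref{prop_sing_val_Gabor_regular}.

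Next I would reduce the sphere to a Gaussian window by writing $g=h/\|h\|_2$ with $h\sim\mathcal{C}\mathcal{N}\left(0,\frac{1}{M}I_M\right)$ and using Lemma~\ref{lemma_norm_gaussian} to replace $g$ by $h$ at the cost of a fixed constant factor, on an event of probability $1-e^{-cM}$. For $h$ Gaussian and $x,\Lambda'$ fixed, the kept energy $\sum_{\lambda\in\Lambda'}|\langle x,\pi(\lambda)h\rangle|^2=h^*A_{\Lambda'}h$ is a Gaussian quadratic form, where $A_{\Lambda'}=\sum_{\lambda\in\Lambda'}\pi(\lambda)^*xx^*\pi(\lambda)$ is positive semidefinite with $\Tr A_{\Lambda'}=|\Lambda'|$, hence $\mathbb{E}[h^*A_{\Lambda'}h]=|\Lambda'|/M\ge(1-p)|\Lambda|/M$. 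A Gaussian concentration inequality (Hanson--Wright, or Lemma~\ref{chi_square} after diagonalizing $A_{\Lambda'}$) controls the lower tail at a rate governed by $\|A_{\Lambda'}\|_F$ and $\|A_{\Lambda'}\|_{\mathrm{op}}$. To pass from a fixed $x$ to all of $\mathbb{S}^{M-1}$ one takes an $\eta$-net $\mathcal{N}$ of the complex sphere, $|\mathcal{N}|\le(3/\eta)^{2M}$, applies the concentration bound at each net point and each erasure pattern, and extends to arbitrary $x$ using the Lipschitz bound supplied by $\sigma_{\max}^2(\Phi_\Lambda^*)\le M$; the union is over $\mathcal{N}$ and over the $\binom{|\Lambda|}{\lfloor p|\Lambda|\rfloor}\le(e/p)^{p|\Lambda|}$ erasure patterns.

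The hard part will be exactly this final union bound. Unlike the Gaussian frame, whose $NM$ independent entries supply abundant randomness, here a single window $h$ carries only $\Theta(M)$ complex degrees of freedom, so the sharpest lower-tail estimate one can hope for on $h^*A_{\Lambda'}h$ is of order $\exp(-cM)$ (the flat-spectrum regime in which $A_{\Lambda'}$ has effective rank $\asymp M$). Against this, the erasure union alone costs $\exp\!\big(\Theta(p|\Lambda|\log(e/p))\big)=\exp\!\big(\Theta(pM\log^\alpha M\log(e/p))\big)$, which overwhelms $\exp(cM)$ as soon as $\alpha>0$ or $p$ is bounded away from $0$. In other words, the naive net-plus-union argument closes only for $p$ small and $\alpha$ small, and proving the statement for all $p\in(0,1)$ appears to need a genuinely different idea: either controlling the spreading condition uniformly by generic chaining on the process $x\mapsto\sum_{\lambda\in S}|\langle x,\pi(\lambda)h\rangle|^2$, thereby avoiding the subset union entirely, or extending the trace-moment computation of Lemma~\ref{lemma_trace_formula} to hold uniformly over erasure patterns so as to exploit the algebraic structure of $\Lambda$ (as in the explicitly solvable case $\Lambda=F\times\mathbb{Z}_M$). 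Determining the admissible exponent $\alpha$, and in particular whether $\alpha=0$ suffices, is the crux and the reason the statement is stated only as a conjecture.
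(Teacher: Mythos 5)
The statement you are asked to prove is stated in the paper as Conjecture~\ref{conj_sing_val_gabor_unif}: the paper offers no proof, only numerical evidence (the simulations behind Figure~\ref{num_singular} for $\Lambda=F\times\mathbb{Z}_M$ and $p=1/3$) and the analogy with the Fickus--Mixon result for Gaussian frames. So there is no ``paper proof'' to compare against, and your proposal --- which you yourself conclude does not close --- should be judged as an analysis of the obstruction rather than as a proof. On that score it is sound and honest. Your reformulation of $\Delta(p)$ as a min--max over directions and erasure patterns is correct, the reduction from the uniform window to a Gaussian one via Lemma~\ref{lemma_norm_gaussian} is exactly the device the paper uses in Example~\ref{example_structured}(iii), and your identification of the crux is right: a single window carries only $\Theta(M)$ degrees of freedom, so the best lower-tail rate for the kept energy $h^*A_{\Lambda'}h$ is $e^{-cM}$, which cannot absorb a union over $\binom{|\Lambda|}{\lfloor p|\Lambda|\rfloor}$ erasure patterns once $p$ is bounded away from $0$ or $\alpha>0$. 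This is precisely the kind of dependence structure that forces the author to stop at a conjecture.

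One additional difficulty you should flag, because it breaks the argument even before the erasure union: the net step itself. To transfer a constant lower bound on $\|\Phi_{\Lambda'}^*x\|_2^2$ from a net $\mathcal{N}$ to all of $\mathbb{S}^{M-1}$ using the Lipschitz bound $\sigma_{\max}^2(\Phi_{\Lambda'}^*)\le M$, you need mesh $\eta\lesssim M^{-1/2}$, hence $|\mathcal{N}|=e^{\Theta(M\log M)}$, which already defeats an $e^{-cM}$ tail with no erasures at all. To repair this you would first need a much better uniform bound on $\sigma_{\max}^2(\Phi_{\Lambda'}^*)$ than the trivial $M$ --- for instance something of order $|\Lambda|/M$ as in Theorem~\ref{th_sing_val_rand_lambda} --- and even then the net costs $e^{\Theta(M\log\log M)}$ for $|\Lambda|=O(M)$. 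So both the subset union and the sphere union must be avoided, which supports your closing suggestion that a chaining argument on the process $(x,S)\mapsto\sum_{\lambda\in S}|\langle x,\pi(\lambda)h\rangle|^2$, or a trace-moment computation in the spirit of Lemma~\ref{lemma_trace_formula} made uniform over erasure patterns, is the realistic route. As written, the proposal is a correct diagnosis of why the conjecture is open, not a proof of it.
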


%

\section*{Appendix: Probability theory tools}\label{probability_background}

In this appendix we collect the probabilistic tools and results used in the proofs of Theorems \ref{th_m2_max_sing_val_Gabor} and \ref{th_sing_val_rand_lambda}. We start by stating the Hoeffding's inequality in the special case of Bernoulli random variables.

\begin{lemma}[Hoeffding's inequality]\label{Hoeffging_ineq_Bernoulli}
Let $X_j$, $j\in \{1,\dots N\}$, be independent identically distributed Bernoulli random variables, such that $\mathbb{P}\{X_j=1\} = p$, for some $p\in (0,1)$, that is $X_j \sim \text{i.i.d. } B\left(1,p\right)$. Consider the random variable ${S = \sum_{j = 1}^N X_j}$. Then, for every $t>0$, we have
\begin{equation*}
\begin{gathered}
\mathbb{P}\{S< (p - t)N\}\le e^{-2t^2N} \quad \text{and} \quad \mathbb{P}\{S> (p + t)N\}\le e^{-2t^2N}.
\end{gathered}
\end{equation*}
\end{lemma}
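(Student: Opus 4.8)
The plan is to use the classical Chernoff bounding technique, that is, the exponential Markov inequality combined with a moment generating function estimate for bounded random variables. I will prove the upper tail bound $\mathbb{P}\{S > (p+t)N\} \le e^{-2t^2 N}$ in full and then deduce the lower tail bound by symmetry.

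First I would center the summands by setting $Y_j = X_j - p$, so that $\mathbb{E}(Y_j) = 0$ and each $Y_j$ takes values in the interval $[-p,\, 1-p]$, which has length $1$. For any $\lambda > 0$, Markov's inequality applied to the nonnegative random variable $e^{\lambda \sum_j Y_j}$ together with independence gives
\[
\mathbb{P}\{S > (p+t)N\} = \mathbb{P}\left\{\sum_{j=1}^N Y_j > tN\right\} \le e^{-\lambda t N}\,\mathbb{E}\left(e^{\lambda \sum_{j=1}^N Y_j}\right) = e^{-\lambda t N}\prod_{j=1}^N \mathbb{E}\left(e^{\lambda Y_j}\right).
\]

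The key step, and the main obstacle, is to bound each factor $\mathbb{E}(e^{\lambda Y_j})$; this is the content of Hoeffding's lemma, which states that for a centered random variable $Y$ supported in $[a,b]$ one has $\mathbb{E}(e^{\lambda Y}) \le e^{\lambda^2 (b-a)^2/8}$. I would establish this by studying the cumulant generating function $\psi(\lambda) = \log \mathbb{E}(e^{\lambda Y})$: one checks that $\psi(0) = 0$ and $\psi'(0) = \mathbb{E}(Y) = 0$, and that $\psi''(\lambda)$ equals the variance of $Y$ under the tilted measure with density proportional to $e^{\lambda Y}$, a random variable still supported in $[a,b]$, whence $\psi''(\lambda) \le (b-a)^2/4$. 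A second-order Taylor expansion of $\psi$ then yields the stated bound. In our situation $b - a = 1$, so $\mathbb{E}(e^{\lambda Y_j}) \le e^{\lambda^2/8}$ for every $j$.

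Combining the two previous steps produces $\mathbb{P}\{S > (p+t)N\} \le e^{-\lambda t N + N\lambda^2/8}$, valid for all $\lambda > 0$. Minimizing the exponent over $\lambda$ gives the optimal choice $\lambda = 4t$, for which the exponent becomes $-4t^2 N + 2t^2 N = -2t^2 N$, establishing the upper tail estimate. Finally, the lower tail bound $\mathbb{P}\{S < (p-t)N\}$ follows by applying the identical argument to the variables $1 - X_j$, which are independent and identically distributed Bernoulli with parameter $1 - p$ and whose sum equals $N - S$; the event $\{S < (p-t)N\}$ coincides with $\{N - S > ((1-p)+t)N\}$, to which the upper tail bound already proven applies verbatim. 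Everything beyond Hoeffding's lemma is a routine one-variable optimization.
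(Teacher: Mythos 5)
Your proof is correct: the centering, the exponential Markov bound, Hoeffding's lemma via the tilted-measure variance bound $\psi''(\lambda)\le (b-a)^2/4$, the optimization $\lambda = 4t$, and the reduction of the lower tail to the upper tail applied to $1-X_j$ are all standard and carried out accurately. The paper states this lemma without proof, as a classical tool collected in the appendix, so there is nothing to compare against; your argument is the canonical one.
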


The following lemma, proven in \cite{laurent2000adaptive}, is useful for obtaining bounds on the norms of random vectors.

\begin{lemma}\emph{\textbf{\cite{laurent2000adaptive}}}\label{chi_square}
Let $Y_1,\dots,Y_M \sim  i.i.d. ~\mathcal{N}(0,1)$ and fix $c = (c_1,\dots,c_M)$ with $c_k\ge 0$, $k\in\{1,\dots, M\}$. Then, for $Z = \sum_{k = 1}^M c_k(Y_k^2 - 1)$ the following inequalities hold for any $t>0$.
\begin{equation}\label{lower}
\mathbb{P}\{Z \ge 2||c||_2 \sqrt{t} + 2||c||_\infty t\} \le e^{-t};
\end{equation}
\begin{equation}\label{upper}
\mathbb{P}\{Z \le -2||c||_2 \sqrt{t}\} \le e^{-t}.
\end{equation}

\end{lemma}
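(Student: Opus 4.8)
The plan is to establish both bounds by the Cram\'er--Chernoff (exponential moment) method, exploiting that $Z = \sum_{k=1}^M c_k(Y_k^2 - 1)$ is a weighted sum of independent, centered $\chi^2_1$ variables, so that $\mathbb{E}(Z) = 0$. First I would record the Laplace transform of a single summand: for $\lambda < 1/2$, $\mathbb{E}\left(e^{\lambda(Y_k^2 - 1)}\right) = e^{-\lambda}(1 - 2\lambda)^{-1/2}$, obtained by completing the square in the Gaussian integral. By independence of the $Y_k$ this factorizes as
\[ \mathbb{E}\left(e^{\lambda Z}\right) = \prod_{k=1}^M e^{-\lambda c_k}(1 - 2\lambda c_k)^{-1/2}, \]
which is finite exactly when $2\lambda c_k < 1$ for every $k$, i.e. for $0 \le \lambda < (2\|c\|_\infty)^{-1}$ in the upper-tail regime and for all $\lambda \le 0$ in the lower-tail regime.

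The second step is to bound the cumulant generating function
\[ \log\mathbb{E}\left(e^{\lambda Z}\right) = \sum_{k=1}^M\left(-\lambda c_k - \tfrac{1}{2}\log(1 - 2\lambda c_k)\right). \]
Setting $u = 2\lambda c_k$ and expanding $-\tfrac{1}{2}\log(1-u) = \tfrac{1}{2}\sum_{n\ge 1}u^n/n$, each summand equals $\sum_{n\ge 2}u^n/(2n)$. For the upper tail ($0 \le u < 1$) I would use $1/(2n) \le 1/4$ to get the summand $\le \tfrac{1}{4}u^2/(1-u) = \lambda^2 c_k^2/(1 - 2\lambda c_k)$, and for the lower tail ($u \le 0$) an elementary one-variable estimate gives the summand $\le u^2/4 = \lambda^2 c_k^2$. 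Summing over $k$ and replacing $1 - 2\lambda c_k$ by $1 - 2\lambda\|c\|_\infty$ in the denominator yields the sub-gamma estimate $\log\mathbb{E}(e^{\lambda Z}) \le \lambda^2\|c\|_2^2/(1 - 2\lambda\|c\|_\infty)$ for $0 \le \lambda < (2\|c\|_\infty)^{-1}$, together with the purely sub-Gaussian estimate $\log\mathbb{E}(e^{-\mu Z}) \le \mu^2\|c\|_2^2$ valid for all $\mu \ge 0$.

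The final step is Markov's inequality for $e^{\lambda Z}$ followed by optimization over $\lambda$. For the upper-tail bound \eqref{lower}, the estimate $\mathbb{P}\{Z \ge z\} \le \exp\left(-\lambda z + \lambda^2\|c\|_2^2/(1 - 2\lambda\|c\|_\infty)\right)$ is the standard sub-gamma Cram\'er transform with variance factor $2\|c\|_2^2$ and scale $2\|c\|_\infty$; its Chernoff optimization, taken at $z = 2\|c\|_2\sqrt{t} + 2\|c\|_\infty t$, gives the probability at most $e^{-t}$. For the lower-tail bound \eqref{upper}, minimizing $\exp(-\mu z + \mu^2\|c\|_2^2)$ over $\mu \ge 0$ at the optimum $\mu = \sqrt{t}/\|c\|_2$ with $z = 2\|c\|_2\sqrt{t}$ again gives $e^{-t}$.

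I expect the main obstacle to be pinning down the logarithmic inequality with the sharp constant, so that the optimization reproduces exactly the thresholds $2\|c\|_2\sqrt{t} + 2\|c\|_\infty t$ and $2\|c\|_2\sqrt{t}$ rather than weaker universal-constant versions; in particular one must keep the admissibility constraint $\lambda < (2\|c\|_\infty)^{-1}$ so that the optimizing $\lambda$ remains in range, which is precisely the sub-gamma bookkeeping. This is the computation carried out in \cite{laurent2000adaptive}.
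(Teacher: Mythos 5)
Your proposal is correct. Note, however, that the paper does not prove this lemma at all --- it is quoted verbatim from the cited reference \cite{laurent2000adaptive}, so there is no in-paper proof to compare against. Your argument (exact Laplace transform of a centered $\chi^2_1$ variable, the series bound $-u/2-\tfrac12\log(1-u)=\sum_{n\ge2}u^n/(2n)\le u^2/(4(1-u))$ for $0\le u<1$ and $\le u^2/4$ for $u\le 0$, then the sub-gamma Chernoff optimization with variance factor $2\|c\|_2^2$ and scale $2\|c\|_\infty$) is precisely the Laurent--Massart proof, and all the constants check out.
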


Using Lemma \ref{chi_square}, we obtain the following bounds on the norm of a random Gaussian vector $h\sim \mathcal{C}\mathcal{N}\left( 0, \frac{1}{M} I_M \right)$.

\begin{lemma}\label{lemma_norm_gaussian}
Consider a random vector $h\in \mathbb{C}^M$, such that $h\sim \mathcal{C}\mathcal{N}\left( 0, \frac{1}{M} I_M \right)$. Then, there exists a constant $C>0$, such that
\begin{equation*}
\mathbb{P}\left\lbrace\frac{1}{2} < ||h||_2 < 2\right\rbrace \ge 1 - e^{-CM}.
\end{equation*}
\end{lemma}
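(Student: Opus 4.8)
The plan is to reduce $\|h\|_2^2$ to a chi-square random variable with $2M$ degrees of freedom and then invoke the Laurent--Massart bounds of Lemma \ref{chi_square}. Writing $h(k) = a_k + i b_k$, the hypothesis $h\sim \mathcal{C}\mathcal{N}\left(0,\frac{1}{M}I_M\right)$ means that $a_k, b_k$, $k\in\{1,\dots,M\}$, are independent real Gaussians of variance $\frac{1}{2M}$, so that the $2M$ variables $\sqrt{2M}\,a_k$ and $\sqrt{2M}\,b_k$ form a family $Y_1,\dots,Y_{2M}$ of i.i.d. standard normals. Consequently
\begin{equation*}
\|h\|_2^2 = \sum_{k=1}^M \left(a_k^2 + b_k^2\right) = \frac{1}{2M}\sum_{j=1}^{2M} Y_j^2,
\end{equation*}
and in particular $\mathbb{E}\|h\|_2^2 = 1$, so $\|h\|_2$ concentrates near $1$.

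Next I would apply Lemma \ref{chi_square} with the choice $c_1 = \dots = c_{2M} = 1$, for which $Z = \sum_{j=1}^{2M}\left(Y_j^2 - 1\right) = 2M\left(\|h\|_2^2 - 1\right)$, $\|c\|_2 = \sqrt{2M}$, and $\|c\|_\infty = 1$. Rearranging the inequalities \eqref{lower} and \eqref{upper} then yields, for every $t>0$,
\begin{equation*}
\mathbb{P}\left\{\|h\|_2^2 \ge 1 + \sqrt{\tfrac{2t}{M}} + \tfrac{t}{M}\right\} \le e^{-t}, \qquad \mathbb{P}\left\{\|h\|_2^2 \le 1 - \sqrt{\tfrac{2t}{M}}\right\} \le e^{-t}.
\end{equation*}

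The final step is to choose $t$ proportional to $M$ so that both thresholds fall strictly inside the target window $\frac14 < \|h\|_2^2 < 4$, which is equivalent to $\frac12 < \|h\|_2 < 2$. Taking $t = M$ in the first bound gives $\mathbb{P}\{\|h\|_2^2 \ge 2 + \sqrt{2}\} \le e^{-M}$, and since $2+\sqrt2 < 4$ this controls the upper tail; taking $t = \frac{9}{32}M$ in the second bound gives $\mathbb{P}\{\|h\|_2^2 \le \frac14\}\le e^{-9M/32}$. A union bound then produces $\mathbb{P}\{\|h\|_2 \notin (\frac12,2)\} \le e^{-M} + e^{-9M/32} \le e^{-CM}$ for a suitable constant $C>0$, which is the assertion.

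I do not anticipate a genuine obstacle, as this is a routine two-sided concentration estimate for a chi-square variable. The only points requiring care are the bookkeeping of the complex Gaussian normalization — correctly passing from $M$ complex coordinates to $2M$ real standard normals, so that $2M\|h\|_2^2 \sim \chi^2_{2M}$ — and the explicit choice of the constants $t = \Theta(M)$ so that the two thresholds produced by Lemma \ref{chi_square} land inside $(\frac14,4)$; if needed, shrinking $C$ absorbs the additive $\log 2$ from the union bound and covers small values of $M$.
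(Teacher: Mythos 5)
Your proposal is correct and follows essentially the same route as the paper: reduce $2M\|h\|_2^2$ to a $\chi^2_{2M}$ variable via the real and imaginary parts, apply Lemma \ref{chi_square} with all weights equal to $1$, and choose $t=\Theta(M)$ (the paper uses $t=M/2$ for the upper tail and $t=9M/32$ for the lower tail, landing at the thresholds $5/2$ and $1/4$; your $t=M$ works just as well). No issues.
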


\begin{proof}
First, we note that $$2M ||h||_2^2 = 2M \sum_{k = 1}^M (|a_k|^2 + |b_k|^2),$$ where $h(k) = a(k) + ib(k)$ and $a(k), b(k)\sim i.i.d. ~ \mathcal{N}(0,\frac{1}{2M})$. Then, for any \linebreak$k \in \{1,\dots,M\}$, $\sqrt{2M}a(k), \sqrt{2M}b(k)$ are independent standard Gaussian random variables. We apply inequality (\ref{lower}) from Lemma \ref{chi_square} with $c_k = 1$, $k \in \{1,\dots,M\}$, to obtain that, for any $t>0$,
\begin{equation*}
\mathbb{P}\{2M||h||_2^2 \ge \sqrt{8Mt} + 2t + 2M\}\le e^{-t}.
\end{equation*}
\noindent Taking $t= M/2$, we have
\begin{equation}\label{eq_upper_norm_gaussian}
\mathbb{P}\{||h||_2^2 > 4\} = \mathbb{P}\{2M||h||_2^2 > 8M\} \le \mathbb{P}\{2M||h||_2^2 \ge 5M\}\le e^{-M/2}.
\end{equation}
Similarly, by applying inequality (\ref{upper}) from Lemma \ref{chi_square} with $c_k = 1$, we get
\begin{equation*}
\mathbb{P}\left\lbrace||h||_2^2 \le -\sqrt{\frac{2t}{M}} + 1\right\rbrace\le e^{-t},
\end{equation*}
\noindent for every $t>0$. Taking $t= 9M/32$, we obtain
\begin{equation}\label{eq_lower_norm_gaussian}
\mathbb{P}\left\lbrace||h||_2^2 \le -\sqrt{\frac{2t}{M}} + 1\right\rbrace = \mathbb{P}\left\lbrace||h||_2^2 \le \frac{1}{4}\right\rbrace\le e^{-9M/32},
\end{equation}
Summarizing the bounds obtained in \eqref{eq_upper_norm_gaussian} and \eqref{eq_lower_norm_gaussian}, we conclude the desired claim.
\end{proof}

\subsection*{Fourier bias}

In additive combinatorics, the notion of Fourier bias is used to measure pseudorandomness of a set. Roughly speaking, it helps to distinguish between sets which are highly uniform and behave like random sets, and those which are highly non-uniform and behave like arithmetic progressions \cite{tao}. 

\begin{definition}
Take $C\subset \mathbb{Z}_M$ and let ${\bf1}_C$ be the characteristic function of $C$. Then the \emph{Fourier bias} of $C$ is given by
\begin{equation*}
||C||_u = \max_{m\in \mathbb{Z}_M\setminus \{0\}}{|(\mathcal{F} {\bf1}_C)(m)|}.
\end{equation*}
\end{definition}

The following lemma follows from Chernoff's inequality and can be found in \cite[Lemma~4.16]{tao}. Loosely speaking, it shows that, if $B$ is a random subset of $A\subset \mathbb{Z}_M$, then $||B||_u$ is tightly concentrated around $\frac{|B|}{|A|}||A||_u$. In other words, the Fourier bias of a random subset scales proportionally to its cardinality.

\begin{lemma}\label{lemma_fourier_bias_rand}
Consider an additive subset $A$ of $\mathbb{Z}_M$ with $M>4$, and fix $0<\tau\le 1$. Let $B$ be a random subset of $A$, such that ${\bf 1}_{B} (a)\sim \text{i.i.d.}~B(1,\tau)$, for $a\in A$, that is, events $\{a\in B\}$ are independent and have probability $\tau$. Then, for any $\lambda>0$ and $\sigma^2 = \frac{|A|}{M^2}\tau (1 - \tau)$, we have
\begin{equation*}
\mathbb{P}\left\lbrace | ||B||_u - \tau ||A||_u| \ge \lambda \sigma \right\rbrace \le 4M \max \left\lbrace e^{-\frac{\lambda^2}{8}}, ~e^{-\frac{\lambda \sigma}{2\sqrt{2}}} \right\rbrace.
\end{equation*}
\end{lemma}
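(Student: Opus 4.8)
The plan is to reduce the statement about the maximum over all nonzero frequencies to a concentration estimate at a single fixed frequency, and then take a union bound. First I would fix $m\in\mathbb{Z}_M\setminus\{0\}$ and note that, since the indicators ${\bf 1}_B(a)$ are independent $B(1,\tau)$ for $a\in A$, linearity of expectation gives $\mathbb{E}[(\mathcal{F}{\bf 1}_B)(m)] = \tau(\mathcal{F}{\bf 1}_A)(m)$. I would then work with the centered complex random variable
\[
W_m := (\mathcal{F}{\bf 1}_B)(m) - \tau(\mathcal{F}{\bf 1}_A)(m) = \frac{1}{M}\sum_{a\in A}\bigl({\bf 1}_B(a) - \tau\bigr)\,e^{-2\pi i am/M}.
\]
Writing $\theta_a = 2\pi a m/M$, a direct computation of the variances of its real and imaginary parts, using $\Var({\bf 1}_B(a)) = \tau(1-\tau)$ and $\sum_{a\in A}(\cos^2\theta_a + \sin^2\theta_a) = |A|$, shows that these two variances sum to exactly $\sigma^2 = \frac{|A|}{M^2}\tau(1-\tau)$, independently of $m$; in particular each is at most $\sigma^2$.

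Next I would pass from the Fourier bias to $W_m$ via the reverse triangle inequality for the sup-norm: since $||B||_u$ and $\tau||A||_u$ are the maxima over $m\ne 0$ of $|(\mathcal{F}{\bf 1}_B)(m)|$ and $|\tau(\mathcal{F}{\bf 1}_A)(m)|$, and both are computed over the same index set, we have $\bigl|\,||B||_u - \tau||A||_u\,\bigr| \le \max_{m\ne 0}|W_m|$. Consequently
\[
\mathbb{P}\left\lbrace \bigl|\,||B||_u - \tau||A||_u\,\bigr| \ge \lambda\sigma\right\rbrace \le \sum_{m\ne 0}\mathbb{P}\{|W_m| \ge \lambda\sigma\},
\]
and it remains to bound each of the $M-1 < M$ summands by $4\max(e^{-\lambda^2/8}, e^{-\lambda\sigma/(2\sqrt2)})$.

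For a fixed $m$ I would split $W_m$ into its real and imaginary parts, each of which is a sum over $a\in A$ of independent, mean-zero real terms bounded by $1/M$ in absolute value, with variance sum at most $\sigma^2$. If $|W_m| \ge \lambda\sigma$, then at least one part has absolute value $\ge \lambda\sigma/\sqrt2$, so it suffices to apply a two-regime Chernoff bound to each part at the level $\lambda\sigma/\sqrt2$. Clearing the factor $1/M$, the real part equals $\frac1M\sum_{a}Z_a\cos\theta_a$ with $|Z_a\cos\theta_a|\le 1$ and variance sum $\varsigma^2 \le M^2\sigma^2$; the standard estimate $\mathbb{P}\{|S| \ge t\varsigma\} \le 2\max(e^{-t^2/4}, e^{-t\varsigma/2})$ applied with threshold $M\lambda\sigma/\sqrt2 = t\varsigma$ forces $t \ge \lambda/\sqrt2$, so the Gaussian exponent $t^2/4 \ge \lambda^2/8$ yields $e^{-\lambda^2/8}$, while the sub-exponential exponent $t\varsigma/2 = M\lambda\sigma/(2\sqrt2) \ge \lambda\sigma/(2\sqrt2)$ (using $M>4$) yields $e^{-\lambda\sigma/(2\sqrt2)}$. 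Each part thus contributes at most $2\max(e^{-\lambda^2/8}, e^{-\lambda\sigma/(2\sqrt2)})$, the two parts at most $4\max(\,\cdot\,)$, and summing over the $M-1$ frequencies gives the claimed bound $4M\max(e^{-\lambda^2/8}, e^{-\lambda\sigma/(2\sqrt2)})$.

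The mean and variance computations, the reverse triangle inequality, and the union bound are routine. The step requiring the most care is matching the constants in the concentration estimate: one must track how the factor $\sqrt2$ lost in the real/imaginary split turns the exponent $t^2/4$ into $\lambda^2/8$, and how bounding each part-variance by the total $\sigma^2$ (rather than its exact value) interacts with the sub-exponential regime, while invoking $M\ge 1$ to absorb the harmless extra factor of $M$ in that exponent. The degenerate cases $\tau = 1$ or $A = \emptyset$ (for which $\sigma = 0$), and the case where one of the two part-variances vanishes, are handled trivially, since then the corresponding probability is either $0$ or the right-hand side already exceeds $1$.
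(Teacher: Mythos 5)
Your proof is correct. Note that there is nothing in the paper to compare it against: Lemma \ref{lemma_fourier_bias_rand} is stated without proof, being quoted from \cite[Lemma 4.16]{tao} together with the remark that it follows from Chernoff's inequality. Your argument --- center $(\mathcal{F}\mathbf{1}_B)(m)$ at its mean $\tau(\mathcal{F}\mathbf{1}_A)(m)$, control $\bigl|\,\|B\|_u-\tau\|A\|_u\,\bigr|$ by $\max_{m\ne 0}|W_m|$ via the reverse triangle inequality, split into real and imaginary parts, apply the two-regime Chernoff bound for independent mean-zero summands bounded by one, and take a union bound over the $M-1$ nonzero frequencies --- is precisely the standard proof behind that citation, and your constant-tracking is sound: the $\sqrt{2}$ lost in the split turns the exponent $t^2/4$ into $\lambda^2/8$, while replacing each part-variance by the total $\sigma^2$ can only weaken the Gaussian branch and leaves the sub-exponential branch untouched, since its exponent $t\varsigma/2$ is pinned at $M\lambda\sigma/(2\sqrt{2})$ by the threshold. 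Two further points are worth recording. First, your choice of normalization, $(\mathcal{F}\mathbf{1}_B)(m)=\frac{1}{M}\sum_{a\in B}e^{-2\pi i a m/M}$, is the only one under which the lemma as stated can hold: with the unnormalized exponential sum the fluctuations of $\|B\|_u$ are of order $M\sigma$ rather than $\sigma$, so the stated tail bound would fail; you have thus implicitly resolved an ambiguity that the paper itself blurs in the last line of the proof of Corollary \ref{cor_sum_rand_roots_of_unity}. Second, your argument actually proves the stronger bound $4M\max\bigl(e^{-\lambda^2/8},\,e^{-M\lambda\sigma/(2\sqrt{2})}\bigr)$, which is the form appearing in \cite{tao}; the paper's statement has dropped the factor of $M$ in the second exponent, and your final weakening via $M\ge 1$ is exactly what is needed to recover it.
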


As an easy consequence of Lemma \ref{lemma_fourier_bias_rand}, we obtain the following result that provides an efficient bound on the absolute value of the sum of randomly sampled roots of unity.

\begin{corollary}\label{cor_sum_rand_roots_of_unity}
Let $B$ be a random subset of $\mathbb{Z}_M$, such that ${\bf 1}_{B} (m)\sim \text{i.i.d.}~B(1,\tau)$, for $m\in \mathbb{Z}_M$ and $0<\tau < 1$. Then, for any constant $C>4\sqrt{2}$, we have
\begin{equation*}
\mathbb{P}\left\lbrace \max_{m\in \mathbb{Z}_M\setminus \{0\}} \left|\sum_{b\in B} e^{2\pi i bm/M}\right| < C\log M \right\rbrace \ge 1 -  \frac{1}{M^{\frac{C}{2\sqrt{2}} - 2}}.
\end{equation*}
\end{corollary}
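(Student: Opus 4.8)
The plan is to obtain Corollary \ref{cor_sum_rand_roots_of_unity} as a direct specialization of Lemma \ref{lemma_fourier_bias_rand}, taking the ambient set to be all of $\mathbb{Z}_M$. First I would set $A = \mathbb{Z}_M$, so that the random set $B$ of the corollary is exactly a random subset of $A$ in the sense of the lemma, with each element included independently with probability $\tau$. The crucial simplification is that $||A||_u = 0$: since ${\bf 1}_A \equiv 1$, orthogonality of characters gives $(\mathcal{F}{\bf 1}_A)(m) = \sum_{a \in \mathbb{Z}_M} e^{-2\pi i am/M} = 0$ for every $m \neq 0$. Moreover, by the definition of the Fourier bias, $||B||_u = \max_{m \neq 0}|(\mathcal{F}{\bf 1}_B)(m)| = \max_{m \neq 0}\bigl|\sum_{b \in B} e^{2\pi i bm/M}\bigr|$ (the conjugation inside the transform is immaterial under the absolute value), which is precisely the quantity we want to control. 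With $|A| = M$ the variance parameter of the lemma becomes $\sigma^2 = \frac{\tau(1-\tau)}{M}$, and the tail bound specializes to
\[
\mathbb{P}\left\{||B||_u \ge \lambda\sigma\right\} \le 4M \max\left\{e^{-\lambda^2/8},\, e^{-\lambda\sigma/(2\sqrt{2})}\right\}.
\]

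Next I would choose $\lambda$ so that the deviation threshold lands exactly at $C\log M$, namely $\lambda = \frac{C\log M}{\sigma}$, which gives $\lambda\sigma = C\log M$ and hence $e^{-\lambda\sigma/(2\sqrt{2})} = M^{-C/(2\sqrt{2})}$. It then remains to verify that this exponential term dominates the subgaussian term $e^{-\lambda^2/8}$, i.e. that $\frac{\lambda^2}{8} \ge \frac{\lambda\sigma}{2\sqrt{2}}$, equivalently $\lambda \ge 2\sqrt{2}\,\sigma$. Since $\tau(1-\tau) \le \frac14$ we have $\sigma \le \frac{1}{2\sqrt{M}}$, so $\lambda = \frac{C\log M}{\sigma} \ge 2C\sqrt{M}\log M$, which far exceeds $2\sqrt{2}\,\sigma$ for all $M > 4$. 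Consequently the maximum in the lemma is governed by the exponential term and equals $M^{-C/(2\sqrt{2})}$.

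Finally, I would absorb the prefactor: for $M \ge 4$ one has $4M \le M^2$, so
\[
\mathbb{P}\left\{\max_{m \neq 0}\Bigl|\sum_{b \in B} e^{2\pi i bm/M}\Bigr| \ge C\log M\right\} = \mathbb{P}\left\{||B||_u \ge C\log M\right\} \le 4M \cdot M^{-C/(2\sqrt{2})} \le M^{\,2 - C/(2\sqrt{2})},
\]
and passing to the complement yields $\mathbb{P}\{\,\max_{m\neq 0}|\sum_{b\in B} e^{2\pi i bm/M}| < C\log M\,\} \ge 1 - M^{-(C/(2\sqrt{2}) - 2)}$, as claimed. The hypothesis $C > 4\sqrt{2}$ enters exactly here: it guarantees that the exponent $\frac{C}{2\sqrt{2}} - 2$ is strictly positive, so that the bound is a genuinely decaying probability rather than a vacuous one.

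I do not expect a serious obstacle, since the whole argument is a specialization of Lemma \ref{lemma_fourier_bias_rand} together with a single calibrated choice of $\lambda$. The only point demanding care is the regime check in the second step: one must confirm that the chosen $\lambda$ puts us in the exponential tail of the Bernstein-type estimate, so that $e^{-\lambda\sigma/(2\sqrt{2})}$ rather than $e^{-\lambda^2/8}$ controls the maximum. Were the subgaussian term to dominate, the clean exponent $\frac{C}{2\sqrt{2}} - 2$ would not emerge, so verifying $\lambda \ge 2\sqrt{2}\,\sigma$ (which follows from the smallness of $\sigma$) is the one genuinely necessary estimate.
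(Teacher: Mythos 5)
Your proposal is correct and follows essentially the same route as the paper: apply Lemma \ref{lemma_fourier_bias_rand} with $A=\mathbb{Z}_M$ (so $||A||_u=0$ and $\sigma^2=\tau(1-\tau)/M$), calibrate $\lambda$ so that $\lambda\sigma=C\log M$, and absorb the $4M$ prefactor into the exponent. Your explicit check that the term $e^{-\lambda\sigma/(2\sqrt2)}$ realizes the maximum is a detail the paper leaves implicit, but otherwise the arguments coincide.
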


\begin{proof}
Let us apply Lemma \ref{lemma_fourier_bias_rand} with $A = \mathbb{Z}_M$. Then, since $||\mathbb{Z}_M||_u = 0$ and $\sigma^2 = \frac{|A|}{M^2}\tau (1 - \tau) = \frac{\tau (1 - \tau)}{M}$, for any $\lambda>0$ we obtain
\begin{equation*}
\mathbb{P}\left\lbrace  ||B||_u \ge \lambda \sqrt{\frac{\tau (1 - \tau)}{M}} \right\rbrace \le 4M \max \left\lbrace e^{-\frac{\lambda^2}{8}}, ~e^{-\frac{\lambda \sqrt{\tau (1 - \tau)}}{2\sqrt{2M}}} \right\rbrace.
\end{equation*}
Then, by choosing $\lambda = \frac{C}{\sqrt{\tau(1 - \tau)}}\sqrt{M}\log M$ with a constant $C>4\sqrt{2}$, we ensure that
\begin{equation*}
\Scale[0.93]{
\begin{split}
4M \max \left\lbrace e^{-\frac{\lambda^2}{8}}, ~e^{-\frac{\lambda \sqrt{\tau (1 - \tau)}}{2\sqrt{2M}}} \right\rbrace = \max \left\lbrace e^{-\frac{C^2 M\log^2 M}{8\tau(1 - \tau)} + \log (4M)}, ~e^{-\frac{C\log M}{2\sqrt{2}} + \log (4M)}\right\rbrace = \frac{1}{M^{\frac{C}{2\sqrt{2}} - 2}}.
\end{split}}
\end{equation*}
Thus, we obtain that
\begin{equation*}
\mathbb{P}\left\lbrace  ||B||_u \ge C\log M \right\rbrace \le \frac{1}{M^{\frac{C}{2\sqrt{2}} - 2}},
\end{equation*}
\noindent and $||B||_u = \max_{m\in \mathbb{Z}_M\setminus \{0\}}{|(\mathcal{F} {\bf1}_B)(m)|} = \max_{m\in \mathbb{Z}_M\setminus \{0\}} \left|\sum_{b\in B} e^{2\pi i bm/M}\right|$, which concludes the proof.
\end{proof}

\let\Section\section 
\def\section*#1{\Section{#1}} 
\bibliographystyle{plain}
\bibliography{geometric_properties}

\end{document}